\theoremstyle{plain}      
\newtheorem{theorem}{Theorem}[section]      
\newtheorem{lemma}{Lemma}[section]      
\newtheorem{corollary}[theorem]{Corollary}      
\newtheorem{proposition}{Proposition}[section]      
\newtheorem{conjecture}{Conjecture}[section]      
\newtheorem{definition}{Definition}[section]          
\theoremstyle{remark}      
\newtheorem{remark}{Remark}[section]
\newcommand{\Z}{{\mathbb{Z}}}   
\newcommand{\C}{{\mathbb{C}}}      
\newcommand{\R}{{\mathbb{R}}}      
\begin{document}

\date{\today}

\title{On Burau representations at roots of unity}         
\author{      
\begin{tabular}{cc}      
 Louis Funar &  Toshitake Kohno\\      
\small \em Institut Fourier BP 74, UMR 5582       
&\small \em IPMU, Graduate School of Mathematical Sciences\\      
\small \em University of Grenoble I &\small \em The University of Tokyo    \\      
\small \em 38402 Saint-Martin-d'H\`eres cedex, France      
&\small \em 3-8-1 Komaba, Meguro-Ku, Tokyo 153-8914 Japan \\      
\small \em e-mail: {\tt funar@fourier.ujf-grenoble.fr}      
& \small \em e-mail: {\tt kohno@ms.u-tokyo.ac.jp} \\      
\end{tabular}      
}

\maketitle 

\begin{abstract}
We consider subgroups of the braid groups which are 
generated by $k$-th powers of the standard generators 
and prove that any infinite intersection (with   
even $k$) is trivial. This is motivated by some conjectures of 
Squier concerning the kernels of Burau's representations of the braid groups at 
roots of unity. 
Furthermore, we show that the image of the braid group 
on 3 strands by these representations is either a finite group, 
for a few roots of unity, or a finite extension of a triangle group, by 
using geometric methods.

\vspace{0.1cm}
\noindent 2000 MSC Classification: 57 M 07, 20 F 36, 20 F 38, 57 N 05.  
 
\noindent Keywords:  Mapping class group, Dehn twist, Temperley-Lieb algebra, 
triangle group, braid group, Burau representation.

\end{abstract}

\section{Introduction and statements}

The first part of the present paper is devoted to the 
study of  groups related to the kernels of  
Burau's  representations of the braid groups at roots of unity. 
We consider two conjectures stated by Squier in \cite{Sq} 
concerning these kernels. These conjectures were part of 
an approach to the faithfulness of Burau's representations 
and it seems that they were overlooked over the years because of the counterexamples 
found by Moody, Long, Paton and Bigelow (see \cite{Moody,LP,Big}) 
for braids on $n\geq 5$ strands. 

\vspace{0.2cm}\noindent
Specifically, let $B_n$ denote the braid group on $n$ strands with 
the standard generators $g_1,g_2,\ldots,g_{n-1}$. Squier was interested 
to compare the kernel of Burau's representation $\beta_{q}$ at a 
$k$-th root of unity $q$ with the normal subgroup $B_n[k]$ 
of $B_n$ generated by $g_j^{k}$, $1\leq j\leq n-1$. 
Our first result answers a strengthened form of the conjecture C2 in \cite {Sq}:

\begin{theorem}\label{intersection}
The intersection of $B_n[2k]$ over any infinite set 
of integers $k$ is trivial. 
\end{theorem}

\vspace{0.2cm}\noindent
Our method does not give any information about the intersection of 
$B_n[k]$ with odd $k$.  

\vspace{0.2cm}\noindent
The proof  uses the asymptotic faithfulness 
of quantum representations of mapping class groups, 
due to Andersen (\cite{A}) and independently 
to Freedman, Walker and Wang (\cite{FWW}). 
The other conjecture stated in \cite{Sq} is that $B_n[k]$ 
is the kernel of Burau's representation. This is  false 
because Burau's representation at a generic parameter 
is not faithful for $n\geq 5$ (see Proposition \ref{false}).

\vspace{0.2cm}\noindent
The main body of the paper is devoted to the complete 
description of the image of Burau's representation of $B_3$.
We can state our main result in this direction as follows:  

\begin{theorem}\label{B3}
Assume that $q$ is a primitive $n$-th root of unity and $g_1,g_2$ are the 
standard generators of $B_3$. Then 
$\beta_{-q}(B_3)$ has a presentation with generators 
$g_1,g_2$ and relations: 
\begin{enumerate}
\item The case $n=2k$ and $k$ is odd: 
\[\begin{array}{lll}
\mbox{\rm Braid relation:} &  &  g_1g_2g_1=g_2g_1g_2, \\
\mbox{\rm Power relations:} &  & g_1^{2k}=g_2^{2k}=(g_1^2g_2^2)^k=1. \\ 
\end{array}\]
\item The case  $n= 2k$ and $k$ is even: 
\[\begin{array}{lll}
\mbox{\rm Braid relation:} &  &  g_1g_2g_1=g_2g_1g_2, \\
\mbox{\rm Power relations:} &  & g_1^{2k}=g_2^{2k}=(g_1^2g_2^2)^{2k}=1. \\
\end{array}\]
\item The case $n= 2k+1$: 
\[\begin{array}{lll}
\mbox{\rm Braid relation:} &  &   g_1g_2g_1=g_2g_1g_2, \\
\mbox{\rm Power relations:} &  &  g_1^{2k+1}=g_2^{2k+1}=(g_1^2g_2^2)^{2(2k+1)}=1.\\
\end{array}\]
\end{enumerate}
\end{theorem}

\vspace{0.2cm}\noindent
A similar result was obtained independently by Masbaum in \cite{Mas1} in a 
slightly different context. Consider the 
2-dimensional $SO(3)$-quantum representations of the mapping class group 
of the punctured torus at a  primitive $2p$-th root 
of unity for odd $p$, with the puncture labeled by 
the color $c=\frac{p-1}{2}-2$. Then the result proved by 
Masbaum is that the kernel of this representation  
is normally generated by the $p$-th powers of the Dehn twists. 
However, these  quantum representations are covered by 
Burau's representations of $B_3$, so the two results above are 
equivalent. The same arguments apply to the 
quantum representations of  the mapping class group $M_{0,4}$ of the 
$4$-holed sphere. Notice that 2-dimensional representations 
of $B_3$ are equivalent either to abelian representations, 
to some not completely reducible representations, or else 
to Burau's representation.  
Another consequence of this theorem is the fact 
that the image of a pseudo-Anosov mapping class in the 
mapping class group of the punctured (or holed) torus  
by the quantum representations considered above is of infinite order 
for $p$ large enough.  This  solves a particular case of a conjecture 
formulated by Andersen, Masbaum and Ueno in \cite{AMU}; 
a proof of the conjecture in
this case was announced by Masbaum in \cite{AMU}, Remark 5.9
(see also \cite{GM2}, p.4).

\vspace{0.2cm}\noindent
The proof of this algebraic statement has a strong geometric flavor. 
A key ingredient is Squier's theorem concerning the 
unitarizability of Burau's representation (see \cite{Sq}). 
The non-degenerate Hermitian form defined by Squier is 
invariant under the braid group, but it is not always positive definite. 
First, we find whether it is positive definite, so that the representation can 
be conjugate into $U(2)$.  On the other hand, when this Hermitian form is 
not positive definite, the representation can be 
complex-unitarized, namely it can be (rescaled and) conjugated into $U(1,1)$.

\vspace{0.2cm}\noindent
We will then focus then on the complex-unitary case.  
We show that that the image of some free subgroup 
of  the pure braid group $PB_3$ on three strands  
by Burau's representation is a subgroup of $PU(1,1)$ 
generated by three rotations in the hyperbolic plane. 
Here, the hyperbolic plane is identified to the 
unit disk of  the  complex projective  line $\C P^1$. 
Geometric arguments due to Knapp, Mostow and Deraux (see 
\cite{Knapp,Mo,Deraux}) show that the image of $PB_3$ is a discrete 
triangle group and thus we can give an explicit presentation for it. 
Then an easy argument permits to describe the image of 
the slightly larger group $B_3$. In particular, we obtain a 
description of the kernel of Burau's representation of 
$B_3$ at roots of unity, which will give a proof of Theorem \ref{B3}. 
This first part is not only purely technical preparation for the second 
part of the paper. In fact, finding the image of the 
Burau representation seems to be a difficult problem, which is interesting 
by itself (see e.g. \cite{CL,CF,McM}).

\vspace{0.2cm}\noindent
In a sequel to the present article 
we will give some applications of these results 
to the study  of the images of the mapping class groups   
by quantum representations (see \cite{FK2}).

\vspace{0.2cm}\noindent 
{\bf Acknowledgements.}  We are grateful to 
Norbert A'Campo, J{\o}rgen Andersen,  Jean-Beno\^it Bost, Martin Deraux, 
Greg Kuperberg, Fran\c{c}ois Labourie, Yves Laszlo, Greg McShane, 
Ivan Marin, Gregor Masbaum, Daniel Matei, Majid Narimannejad, Christian Pauly, 
Bob Penner, Christophe Sorger and Richard Wentworth   
for useful discussions  and to the
referees for a careful reading of the paper leading to numerous 
corrections and  suggestions.
The first author was partially supported by 
the ANR Repsurf:ANR-06-BLAN-0311.
The second author is partially supported by Grant-in-Aid for Scientific
Research 20340010, Japan Society for Promotion of Science, and by World 
Premier International Research Center Initiative, MEXT, Japan.  
A part of this work was accomplished while the second
author was staying at Institut Fourier in Grenoble. He would like to thank
Institut Fourier for hospitality.

\section{Braid group representations}

\subsection{Jones and Burau's representations at roots of unity} 
In this section we recall the definition of the Jones 
and Burau's representations of the braid groups and 
show that they are equivalent except at 
primitive roots of unity of order 1 and 3. Moreover, we discuss 
when they are unitarizable or complex-unitarizable. 
We start with the following classical definition. 

\begin{definition}
The Temperley-Lieb algebra $A_{\tau,n}$, for $\tau\in\C^*$ and $n\geq 2$ 
is the $\C$-algebra generated by  the projectors $1,e_1,\ldots,e_{n-1}$ 
satisfying the relations:   
\[ e_j^2=e_j, \:\: j\in\{1,2,\ldots,n\},\]
\[ e_ie_j=e_je_i, \:\: {\rm if } \:\: |i-j|\geq 2, \]
\[ e_je_{j+1}e_j=e_je_{j-1}e_j=\tau e_j, \:\: j\in\{1,2,\ldots,n\}.\]
\end{definition}

\vspace{0.2cm}\noindent
There is a natural $\C^*$-algebra structure on $A_{\tau,n}$, obtained 
by setting $e_j^*=e_j$, $j\in\{1,2,\ldots,n\}$.  

\vspace{0.2cm}\noindent
According to Wenzl (\cite{We}) there exist such unitary projectors $e_j$, 
$1\leq j\leq n-1$, for any natural number $n\geq 2$ if and only if $\tau^{-1}\geq 4$ 
or $\tau^{-1}=4\cos^2\left(\frac{\pi}{k}\right)$, for some natural number 
$k\geq 3$. However, for given $n$ one could find projectors $e_1,\ldots,e_{n-1}$ 
as above if $\tau^{-1}=4\cos^2\left(\alpha\right)$, where 
the angle $\alpha$ belongs to some specific arc of the unit circle. 

\vspace{0.2cm}\noindent 
Another definition of the Temperley-Lieb algebra (which is equivalent 
to the former one, at least when  $\tau$ verifies the previous  conditions) is 
as a quotient of the Hecke algebra: 

\begin{definition}\label{tl}
The Temperley-Lieb algebra $A_{n}(q)$ is the quotient of the group 
algebra $\C B_n$ of the braid group $B_n$ 
by the relations:
\[ (g_i-q)(g_i+1)=0,\]
\[ 1+g_i+g_{i+1}+g_ig_{i+1}+g_{i+1}g_i+g_ig_{i+1}g_i=0,\]
where $g_i$ are the standard generators of the braid group $B_n$. 
The quotient obtained by imposing only the first relation above is 
called the Hecke algebra $H_n(q)$. 
\end{definition}

\vspace{0.2cm}\noindent
It is known that $A_n(q)$ is isomorphic to $A_{\tau,n}$ where 
$\tau^{-1}=2+q+q^{-1}$, and in particular, when $q$ is the 
root of unity $q=\exp\left(\frac{2\pi i}{n}\right)$. We suppose from now on 
that $\tau^{-1}=2+q+q^{-1}$. 

\vspace{0.2cm}\noindent
We will analyze  the case where $n=3$ and $q$ is a root of unity, and  
more generally for $|q|=1$. 
Then $A_{\tau,3}$ and $A_n(3)$ are nontrivial and well-defined 
for all $q$ with $|q|=1$ belonging to the arc of circle 
joining $\exp\left(-\frac{2\pi i}{3}\right)$ to 
$\exp\left(\frac{2\pi i}{3}\right)$. We will recover this result below in 
a slightly different context. 

\vspace{0.2cm}\noindent
Furthermore, $A_{\tau,3}$ is semi-simple and splits as 
$M_2(\C)\oplus \C$, where $M_2(\C)$ denotes the 
simple $\C$-algebra of 2-by-2 matrices. 
There is a natural representation of $B_3$  into $A_{\tau,3}$ 
which sends $g_i$ into $qe_i-(1-e_i)$. 
This representation is known to be unitarizable 
when $\tau^{-1}\geq 4$ (see \cite{Jones}).

\begin{proposition}\label{reducible}
Let $q=\exp(i \alpha)$. 
\begin{enumerate}
\item Assume that $q$ is not a primitive root of unity of order $2$ or $3$.   
Then  every completely reducible representation 
$\rho$ of $B_3$ into $GL(2,\C)$  which factors through 
$A_3(q)$ is equivalent to some representation 
$\rho_{q,C}$ defined by:   
\[ \rho_{q,C}(g_1)=\left(\begin{array}{cc}
q & 0 \\
0 & -1 \\
\end{array}
\right), \,\,  \rho_{q,C}(g_2)=\left(\begin{array}{cc}
-\frac{1}{q+1} & -(q+1)C \\
-\epsilon_q(q+1)\overline{C}r^2 & \frac{q^2}{q+1} \\
\end{array}
\right),\]
where $C\in \C-\{0\}$,  
$r^2=r(q,C)^2=|C|^{-2}|q+1|^{-4}|q+\overline{q}+1|$ and 
$\epsilon_q$ is the sign of the 
real number $q+\overline{q}+1$, namely:  
\[ \epsilon_q=\left\{\begin{array}{ll}
1, & {\rm if } \: \alpha\in (-\frac{2\pi}{3}, \frac{2\pi}{3});\\
-1, & {\rm if } \:\alpha\in (\frac{2\pi}{3},\pi)\cup(\pi,\frac{4\pi}{3}).\\
\end{array}\right.
\]
\item Let  $q$ be a  primitive root of unity of order $2$ or $3$.  
Then completely reducible representations 
$\rho$ of $B_3$ into $GL(2,\C)$  which factor through 
$A_3(q)$ are abelian with finite image and equivalent to: 
\[ \rho_{q,0}(g_1)=\rho_{q,0}(g_2)=\left(\begin{array}{cc}
q & 0 \\
0 & -1 \\
\end{array}
\right).
\]
We may extend the definition of $\epsilon_q, r(q,C)$ to this exceptional case 
by setting $\epsilon_q=1$, if $\alpha\in \left\{-\frac{2\pi}{3},\frac{2\pi}{3}\right\}$, $\epsilon_q=-1$, if $\alpha=\pi$ and $r(q,0)^2=1$. 
In this case $\rho_{q,0}$ is both unitarizable and complex-unitarizable.   
\item 
If $\alpha\in \left(-\frac{2\pi}{3}, \frac{2\pi}{3}\right)$, then 
the representation $\rho_{q,C}$ is unitarizable if $r(q,C)^2=1$. 
\item If $\alpha\in\left(\frac{2\pi}{3}, \frac{4\pi}{3}\right)$, then  
the representation $\rho_{q,C}$ is complex-unitarizable  if $r(q,C)^2=1$. 
\end{enumerate}
\end{proposition}
\begin{proof}
We can choose  
$\rho(g_1)=\left(\begin{array}{cc}
q & 0 \\
0 & -1 \\
\end{array}
\right)$ since completely reducible 2-dimensional  representations are 
diagonalizable and the eigenvalues are prescribed. 
Since $g_2$ is conjugate to $g_1$ in $B_3$  
we have $\rho(g_2)=U\rho(g_1)U^{-1}$, where, without loss 
of generality, we can suppose that $U\in SL(2,\C)$. 
We discard the case $q=-1$ from now on  
when the representation should be abelian, as $\rho(g_1)$ is scalar. 

\vspace{0.2cm}\noindent
Set $U=\left(\begin{array}{cc}
a & b \\
c & d \\
\end{array}
\right)$, where $ad-bc=1$. Then we have
 $\rho(g_2)=\left(\begin{array}{cc}
qad+bc & -(q+1)ab \\
(q+1)cd & -qbc-ad \\
\end{array}
\right)$. Therefore $\rho$ factors through $A_3(q)$, namely the second identity 
of Definition \ref{tl} is satisfied, if and only if:
\[ q ad +bc =-\frac{1}{q+1}. \]
If $1+q+q^2\neq 0$, we obtain the solutions: 
$d=\frac{q}{(q+1)^2a}$, 
and $c=-\frac{q^2+q+1}{(q+1)^2b}$. This implies that:      
\[\rho(g_2)=\left(\begin{array}{cc}
-\frac{1}{q+1} & -(q+1)C \\
-\frac{(q^2+q+1)q}{(q+1)^3C} & \frac{q^2}{q+1} \\
\end{array}
\right),\] 
which coincides with the matrix $\rho_{q,C}(g_2)$ in the statement 
of Proposition \ref{reducible},  where 
$C=ab$  and $r^2=r(q,C)^2=\frac{|q+\overline{q}+1|}{|q+1|^4|C|^2}$.

\vspace{0.2cm}\noindent
If $q$ is a primitive root of unity  
of order 3,  then we find $d=\frac{q}{(q+1)^2a}$ 
and either $b=0$ and $c$ arbitrary or $c=0$ and $b$ arbitrary. 
But the representation  $\rho$  is completely reducible only when  
$b=c=0$ and this gives the second claim of the Proposition \ref{reducible}. 

\vspace{0.2cm}\noindent
We re-scale the representation $\rho_{q,C}$ so that it takes values in 
$SL(2,C)$. This amounts to replace $\rho_{q,C}(g_j)$ by 
$\tilde{\rho}_{q,C}(g_j)=\lambda \rho(g_j)$, where $\lambda$ 
satisfies $\lambda^2q=-1$. 
Then the condition $r^2=1$ is equivalent to 
$\tilde\rho_{q,C}(g_2)=\left(\begin{array}{cc}
u & v \\
-\epsilon\overline{v} & \overline{u} \\
\end{array}
\right)$, where $|u|^2+\epsilon|v|^2=1$. 
In this case the representation $\tilde\rho_{q,C}$  
takes values in $U(2)$, when $\epsilon=1$ and 
in $U(1,1)$, when $\epsilon=-1$ respectively. 
\end{proof}

\begin{remark}
Notice that representations associated to the same 
$q,|C|^2$ are pairwise conjugate.   
\end{remark}

\vspace{0.2cm}\noindent
The representation $\rho_{q,C}$ of $B_3$ that arises 
as above and for which the parameter $C$ satisfies $r(q,C)^2=1$ will be called 
the {\em Jones representations} of $B_3$ at $q$. By the previous remark 
the conjugacy class of $\rho_{q,C}$ is uniquely determined  by the value of $q$.  
We omit the subscript $C$ in the sequel  when the choice of 
$C$ is not relevant. 

\begin{proposition}
Let $\tilde\rho:B_3\to SU(2)$ be a 
unitary Jones representation at $q=\exp(i\alpha)$, 
for $\alpha\in (-\frac{2\pi}{3}, \frac{2\pi}{3})$. Let $Q:SU(2)\to SO(3)$ 
be the standard double covering map. Then 
$Q\circ\tilde\rho(g_1)$ and $Q\circ\tilde\rho(g_2)$ are 
two rotations of angle $\pi+\alpha$, whose axes form an angle $\theta$ 
which is given by the formula:  
\[ \cos\theta= \frac{\cos \alpha}{1+\cos\alpha} \cdot \] 
\end{proposition}
\begin{proof}
The set of anti-Hermitian 2-by-2 matrices, namely  the matrices 
$A=\left(\begin{array}{cc}
w+ix & y+iz \\
-y +iz  & w-ix \\
\end{array}
\right)$ with real $w,x,y,z$,  is identified with the space 
${\mathbb H}$ of quaternions $w+ix+jy+kz$. Under this identification $SU(2)$ 
corresponds to the sphere consisting of the unit quaternions. 
In particular, any element  of $SU(2)$ acts by conjugacy on $\mathbb H$. 
Let $\R^3\subset \mathbb H$ be the vector subspace given  
by  the equation $w=0$. Then $\R^3$ is $SU(2)$-conjugacy invariant and the 
linear transformation induced by $A\in SU(2)$ on $\R^3$ is 
the orthogonal matrix $Q(A)\in SO(3)$. 

\vspace{0.2cm}\noindent
A simple computation yields the following explicit formula for $Q$: 
\[ Q\left(\begin{array}{cc}
w+ix & y+iz \\
-y +iz  & w-ix \\
\end{array}
\right)= 
\left(\begin{array}{ccc}
1-2(y^2+z^2) & 2(xy-wz) & 2(xz+wy)\\
2(xy+wz) &1-2(x^2+z^2)  & 2(yz-wx) \\
2(xz-wy) & 2(yz+wx) & 1-2(x^2+y^2) \\
\end{array}
\right).
\]

\vspace{0.2cm}\noindent
This shows that $Q(\tilde\rho(g_1))$ is the rotation of angle $\pi+\alpha$ 
around the axis $i\in \R^3$ in the space of imaginary quaternions. 
Instead of  giving the cumbersome computation of $Q(\tilde\rho(g_2))$ 
observe that $Q(\tilde\rho(g_2))$ is also a rotation of angle $\pi+\alpha$ 
since it is conjugate to $Q(\sigma_1)$. Let $N$ be the rotation
axis of $Q(\tilde\rho(g_2))$. If $N=ui +v j + wk$,   then $\cos\theta=u$. 

\vspace{0.2cm}\noindent
A direct computation shows that the matrix of the rotation of angle 
$\pi+\alpha$ around the axis $N$ has a matrix whose first entry 
on the diagonal reads $u^2+(1-u^2)\cos(\pi+\alpha)$. Therefore 
we have the identity: 
\[ u^2+(1-u^2)\cos(\pi+\alpha)=1-2(y^2+z^2),\]
where $y,z$ are the off diagonal entries of 
$\tilde\rho(g_2)$, namely:   
\[ y^2+z^2=|-\lambda(q+1)C|^2=\frac{|q+1+\overline{q}| }{|q+1|^2}.\]
This gives $|u|=\left|\frac{\cos \alpha}{1+\cos\alpha}\right|$. 
Identifying one more term in the matrix of $Q(\sigma_2)$ yields 
the sign of $u$. We omit the details. 
\end{proof}

\begin{remark}
In \cite{RS} the authors consider the structure of groups generated by 
two rotations of finite order for which axes form an angle which is 
an integral part of $\pi$. Their result is that  there are only few new 
relations. However, the previous Proposition shows that we cannot apply 
these results to our situation. It seems quite hard just to find 
those $\alpha$ for which the axes verify the condition from \cite{RS}. 
\end{remark}

\begin{definition}
The (reduced) Burau representation $\beta:B_n\to GL(n-1,\Z[q,q^{-1}])$ 
is defined on the standard generators 
\[ \beta_q(g_1)=\left(\begin{array}{cc}
-q & 1 \\
0  & 1 \\
\end{array}
\right) \oplus {\mathbf 1}_{n-3},\]
\[ \beta_q(g_j)={\mathbf 1}_{j-2}\oplus 
\left(\begin{array}{ccc}
1 & 0 & 0 \\
q & -q & 1 \\
0 & 0  & 1 \\
\end{array}
\right) \oplus {\mathbf 1}_{n-j-2}, \:\: {\rm for} \:\: 2\leq j\leq n-2,\]
\[ \beta_q(g_{n-1})={\mathbf 1}_{n-3}\oplus 
\left(\begin{array}{cc}
1 & 0 \\
q & -q \\
\end{array}
\right). 
\]
\end{definition}

\vspace{0.2cm}\noindent
Jones already observed in \cite{Jones} 
that the following  holds true for  the principal 
roots of unity, i.e., for the roots of unity of the form 
$\exp\left(\frac{2\pi i}{n}\right)$, $n\in\Z$: 

\begin{proposition}\label{jonesburau}
Burau's representation of $B_3$ at $q$  
is conjugate to the tensor product of the parity representation and the 
Jones representation at $q$, for all $q$ which are not primitive roots of unity of order $2$ or $3$. 
\end{proposition}
\begin{proof}
Recall that the parity representation $\sigma:B_3\to \{-1,1\}\subset 
\C^*$ is given by $\sigma(g_j)=-1$. 
Burau's representation for $n=3$ is given  by 
\[ \beta_q(g_1)=\left(\begin{array}{cc}
-q & 1 \\
0  & 1 \\
\end{array}
\right), \,\, 
\beta_q(g_2)=  
\left(\begin{array}{cc}
1 & 0 \\
q & -q \\
\end{array}
\right).\]
Take then $V=\left(\begin{array}{cc}
a & \frac{1}{(q+1)a} \\
0  & \frac{1}{a} \\
\end{array}
\right)$, for $q\neq -1$, where 
$a$ is given by 
%$\epsilon (q+1)\overline{C}= qa^2$. 
$(q+1)^3C a^2=1+q+q^2$ and $C\neq 0$ is chosen such that 
$\rho_{q,C}$ is unitarizable, namely 
$|C|^2=|q+1|^{-4}|1+q+\overline{q}|$. 
One verifies easily that 
$(\sigma\otimes \rho_{q,C})(g_j)=V^{-1}\beta_q(g_j)V$. 
\end{proof}

\begin{remark}
The definition of $A_{\tau,3}$ in terms of orthogonal projections 
has a unitary flavor and thus it works properly only when 
Burau's representation is unitarizable, namely 
only for those $q=\exp(i \alpha)$, where 
$\alpha\in  (-\frac{2\pi}{3}, \frac{2\pi}{3})$. 
\end{remark}

\subsection{Two conjectures of Squier and proof of Theorem \ref{intersection}}
This section is devoted to the study of the kernels of the 
Jones and Burau's representations at roots of unity. Our motivation 
comes from the following conjectures of Squier in \cite{Sq}: 
\begin{conjecture}[Squier]\label{unu}
The kernel of Burau's representation $\beta_{-q}$ for a primitive 
$k$-th root of unity $q$ is the normal subgroup $B_n[k]$ 
of $B_n$ generated by $g_j^k$, $1\leq j\leq n-1$. 
\end{conjecture}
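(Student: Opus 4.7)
My plan is to split the conjecture into two inclusions. For the easy direction $B_n[k] \subseteq \ker \beta_{-q}$, observe that the nontrivial $2 \times 2$ block of $\beta_{-q}(g_j)$ is upper triangular with diagonal entries $q$ and $1$; since $q \neq 1$ it is diagonalizable, and its $k$-th power is the identity as soon as $q^k = 1$. Hence each $g_j^k$ lies in $\ker \beta_{-q}$, and since the kernel is normal this forces $B_n[k] \subseteq \ker \beta_{-q}$.

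The hard direction $\ker \beta_{-q} \subseteq B_n[k]$ is equivalent to the injectivity of the induced representation $\overline{\beta_{-q}}:B_n/B_n[k] \to GL(n-1,\C)$. I would attack it by analyzing the quotient group intrinsically from its presentation (braid relations together with the power relations $g_j^k=1$) and then comparing this abstract group with the explicit image. For $n=3$, the $2 \times 2$ image is amenable to direct description (cf.\ Proposition~\ref{reducible}), so one could hope to match a normal form in $B_3/B_3[k]$ with specific matrices. For general $n$, the natural tool is induction using the subgroup $B_{n-1} \hookrightarrow B_n$ stabilizing the last strand, together with a choice of transversal of the corresponding cosets.

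The main obstacle is that the two sides of the conjecture are genuinely inequivalent. For $n \geq 5$ the Burau representation is already known to be non-faithful over $\Z[q^{\pm 1}]$ through the constructions of Moody, Long--Paton and Bigelow, and the kernel words produced there are not expected to land in $B_n[k]$ after specializing $q$ to a $k$-th root of unity, so Step~2 must fail in that range. Even for $n=3$, where Burau is faithful in the integral sense, the image at a root of unity should acquire relations beyond the braid and power ones --- a ``center relation'' on $(g_1 g_2)^m$ and, for odd $k$, further word relations, as the description in Theorem~\ref{B3} already foreshadows --- translating into kernel elements outside $B_n[k]$. So the plan as sketched can at best yield the forward inclusion, and one expects the conjecture itself to fail as stated.
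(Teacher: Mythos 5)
Your instinct is the right one: this statement is a conjecture that the paper \emph{refutes}, not proves, and your closing paragraph correctly predicts both failure mechanisms. Your forward inclusion $B_n[k]\subseteq\ker\beta_{-q}$ is also correct as stated (each $\beta_{-q}(g_j)$ is diagonalizable with eigenvalues in $\{1,q\}$, so its $k$-th power is the identity, and normality does the rest). But the refutation itself is left at the level of ``one expects,'' and that is where the genuine mathematical content lies. For $n\geq 5$, knowing a nontrivial element $a$ of the generic Burau kernel does not by itself show $a\notin B_n[k]$ for some $k$; a priori $a$ could lie in $B_n[k]$ for every $k$. The paper closes this gap in Proposition~\ref{false} by invoking Theorem~\ref{intersection}, namely $\bigcap_k B_n[k]=1$, which is itself nontrivial: it is proved by identifying $B_n[2k]$ with the normal closure of $k$-th powers of Dehn twists in the pure braid group, embedding into a mapping class group, and applying the asymptotic faithfulness of the quantum representations (Andersen, Freedman--Walker--Wang). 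With that in hand, if the conjecture held for infinitely many even orders one would get $a\in\bigcap_k B_n[2k]=1$, a contradiction.

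For $n=3$ the needed witness is also concrete rather than expected: the corollary following Theorem~\ref{B3} shows $(g_1^2g_2^2)^k\in\ker\beta_{-q}\setminus B_3[2k]$. This requires two inputs your sketch does not supply: first, the complete determination of $\ker\beta_{-q}\cap F(g_1^2,g_2^2)$, which the paper obtains by unitarizing into $U(1,1)$ and identifying the image of $\langle g_1^2,g_2^2\rangle$ with a hyperbolic triangle group via the Knapp--Mostow--Deraux discreteness criterion (so the power relation $(g_1^2g_2^2)^k=1$ really does hold in the image); and second, the free-group fact that $(ab)^k$ does not lie in the normal closure of $a^k$ and $b^k$ in $F(a,b)$ for $k>2$. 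So your proposal reaches the correct verdict and the correct easy inclusion, but the two decisive steps --- the triviality of $\bigcap_k B_n[k]$ and the explicit triangle-group presentation of $\beta_{-q}(B_3)$ --- are exactly the parts that must be proved and are missing.
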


\vspace{0.2cm}\noindent
The second conjecture of Squier, which is related to the former one, is: 

\begin{conjecture}[Squier]\label{doi}
The intersection of $B_n[k]$ over all $k$ is trivial. 
\end{conjecture}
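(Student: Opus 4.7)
The plan is to derive Theorem~\ref{intersection} from the asymptotic faithfulness of the $SU(2)$--Reshetikhin--Turaev representations of mapping class groups, proved by Andersen \cite{A} and independently by Freedman--Walker--Wang \cite{FWW}. The braid group $B_n$ is the mapping class group of the $n$-punctured disk, and via the hyperelliptic branched double cover it embeds in $M_g$ for $g = \lfloor (n-1)/2 \rfloor$, sending each standard generator $g_j$ to a Dehn twist $T_{\gamma_j}$ along an explicit simple closed curve. This identification makes the entire family of quantum representations $\rho_\ell$ indexed by the level $\ell$ available for the analysis of $B_n$.

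The key representation-theoretic step is to exhibit, for each $k \geq 1$, a level $\ell(k)$ such that $\rho_{\ell(k)}(g_j^k) = 1$ for every standard generator $g_j$. This follows because the image of a Dehn twist in the $SU(2)$-TQFT is a diagonal matrix whose entries are roots of unity of order dividing a universal integer $N(\ell)$ that grows with $\ell$; it then suffices to choose $\ell$ with $k \mid N(\ell)$. Since $\ker \rho_{\ell(k)}$ is normal, the whole normal closure $B_n[k]$ is contained in $\ker \rho_{\ell(k)}$. Consequently, if $\alpha \in \bigcap_k B_n[k]$, then $\alpha \in \ker \rho_{\ell(k)}$ for every $k$, and asymptotic faithfulness forces $\alpha$ to lie in the centre of $M_g$ and hence, by restriction, in the centre of $B_n$. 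A direct abelianization computation rules out any non-trivial central contribution: the map $B_n \twoheadrightarrow \Z$ sends $B_n[k]$ into $k\Z$, so an element of the intersection has trivial abelianization, while the generator $\Delta^2 = (g_1 g_2 \cdots g_{n-1})^n$ of the centre of $B_n$ has nonzero abelianization $n(n-1)$; hence no non-trivial central element of $B_n$ belongs to $\bigcap_k B_n[k]$, and $\alpha = 1$.

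The main obstacle I anticipate is in the key step: producing, for each $k$, an honest (rather than merely projective) quantum representation in which $g_j^k$ is literally the identity. The twist acts in each irreducible summand by a specific root of unity whose order depends on the Jones--Wenzl label, and one must arrange for the least common multiple of these orders to divide $k$, which requires a careful selection of level together with a framing correction. A smaller technical point is matching the closed-surface formulation of asymptotic faithfulness with the braid/punctured-surface setting, but the hyperelliptic embedding turns any non-trivial braid into a non-trivial element of $M_g$, so this causes no serious difficulty.
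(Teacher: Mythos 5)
Your overall strategy --- realize the $g_j^k$ as powers of Dehn twists inside a closed-surface mapping class group, observe that quantum representations kill suitable powers of Dehn twists, and invoke asymptotic faithfulness --- is exactly the paper's strategy, but two steps of your execution are genuinely wrong as written. First, the hyperelliptic map $B_n\to M_g$ to the \emph{closed} surface is not injective, contrary to your closing assertion that it ``turns any non-trivial braid into a non-trivial element of $M_g$.'' By the chain relation, the image of a maximal chain of twists satisfies $(T_{c_1}\cdots T_{c_{2g+1}})^{2g+2}=1$ (the regular neighbourhood of the chain is the complement of a disk), so a nonzero power of the central element $\Delta^2$ dies in $M_g$. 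Birman--Hilden injectivity holds only for the bounded surface $\Sigma_{g,1}$ (or $\Sigma_{g,2}$), and asymptotic faithfulness is a closed-surface statement, so you must bridge the two. The kernel is central, so your abelianization argument can in fact absorb it --- any $\alpha$ in the intersection has trivial image in $H_1(B_n)=\Z$, while every nontrivial element of the preimage of $Z(M_g)$ has a power equal to a nonzero power of $\Delta^2$ and hence nonzero abelianization --- but you need to say this; the claim of injectivity is false. The paper sidesteps the issue entirely by embedding the holed disk into a genus $n+1$ surface via gluing one-holed tori to the boundary circles (Paris--Rolfsen), which \emph{is} injective; the price is that the generators $g_j$ are then half-twists rather than Dehn twists, so one must first prove the combinatorial lemma $B_n[2k]=PB_n[k]$ identifying the normal closure of the $g_j^{2k}$ with the normal closure of $k$-th powers of Dehn twists $A_{ij}^k$.

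Second, your key representation-theoretic step has the divisibility backwards and the quantifiers in the wrong order. To get $\rho_{\ell}(g_j^k)=1$ you need the order of $\rho_\ell(g_j)$ to \emph{divide} $k$; choosing $\ell$ with $k\mid N(\ell)$ achieves nothing (if the twist has order exactly $N(\ell)$, then $g_j^k$ maps to an element of order $N(\ell)/k\neq 1$). Moreover it is not clear that every $k$ is realized as (a divisor of) a twist order at some level. The argument should run in the opposite direction, as in the paper: for each level $\ell$ the twist image has \emph{some} finite order $N(\ell)$, hence $B_n[N(\ell)]\subset\ker\rho_\ell$; since an element of $\bigcap_k B_n[k]$ lies in $B_n[N(\ell)]$ for \emph{every} $\ell$, it lies in $\bigcap_\ell\ker\rho_\ell$, and asymptotic faithfulness applies. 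With the quantifiers arranged this way the ``careful selection of level together with a framing correction'' you worry about becomes unnecessary.
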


\vspace{0.2cm}\noindent 
In order to prove Theorem \ref{intersection}, which  shows that 
a stronger version of   
Conjecture \ref{doi} holds 
we will first need a number of definitions and lemmas. 
Let $\Sigma_{0,n+1}$ be a disk with $n$ holes. The (pure)  mapping class 
group  $M(\Sigma_{0,n+1})$ is the group of framed pure braids 
$\widetilde{PB}_n$ and fits into the exact sequence: 
\[ 1\to \Z^{n}\to \widetilde{PB}_n\to PB_n\to 1\]
where $\Z^n$ is generated by the Dehn twists along the boundary 
curves. 

\vspace{0.2cm}\noindent
The extended mapping class group $M^*(\Sigma_{0,n+1})$ 
is the group of mapping classes of homeomorphisms 
of the disk with $n$ holes that fix point-wise the boundary of the disk 
but are allowed to permute the remaining boundary components, 
which are suitably parameterized. Thus $M^*(\Sigma_{0,n+1})$ is the group 
of framed braids on $n$ strands and we have then the  
exact sequence:  
\[ 1\to \Z^{n}\to M^*(\Sigma_{0,n+1})\to B_n\to 1.\]
Since the unit tangent bundle has a section the exact sequence above 
has a non-canonical splitting, i.e., 
there exists a section $s:B_n\hookrightarrow M^*(\Sigma_{0,n+1})$, 
which we fix once for all. The restriction of $s$ to the subgroup 
$PB_n$ yields a section $PB_n\hookrightarrow \widetilde{PB}_n$.
Let $g_1,\ldots,g_{n-1}$ denote the standard generators of $B_n$.

\begin{definition}
Let $k$ be a positive integer. 
The subgroup $B_n\{k\}$ of $B_n$ is the normal subgroup generated 
by the elements: 
\[ g_1^{2k}, (g_1g_2g_1)^{3k}, (g_1g_2g_3g_2g_1)^{4k}, 
\ldots, (g_1g_2\cdots g_{n-2}g_{n-1}g_{n-2}\cdots g_2g_1)^{nk}.\]
\end{definition}

\vspace{0.2cm}\noindent
Observe that $B_n[2k]\subset B_n\{k\}$. 

\begin{definition}
For any compact orientable surface $\Sigma$ (possibly with boundary) 
we set  $M(\Sigma)[k]$  for the normal subgroup 
of $M(\Sigma)$ generated by the $k$-th powers of Dehn twists.  
\end{definition}

\begin{lemma}\label{framed}
We have $s(B_n\{k\})\subset M(\Sigma_{0,n+1})[k]$. 
\end{lemma}
\begin{proof}
Every normal generator of $B_n\{k\}$  is a pure braid and hence 
$B_n\{k\}\subset PB_n$. Furthermore, let us observe that 
$\delta_j=(g_1g_2\cdots g_{j-2}g_{j-1}g_{j-2}\cdots g_2g_1)^{j}$ is a Dehn twist 
along a curve encircling  the first $j+1$ punctures 
of the  $n$-punctured disk.
Let  $\gamma$ be an embedded 
curve in the $n$-punctured disk which encircles $j+1$ punctures.  
Then the (right) Dehn twist $T_{\gamma}$ along the curve $\gamma$ is conjugate 
to $\delta_j$ by means of some homeomorphism 
of the $n$-punctured disk sending $\gamma$ into the the curve encircling 
the first $j+1$ punctures. 
Thus $B_n\{k\}$ is the group generated by  
the $k$-th powers of Dehn twists on the $n$-punctured disk. 

\vspace{0.2cm}\noindent
The lift of a Dehn twist $T_{\gamma}\in PB_n$  into the mapping class group 
$M(\Sigma_{0,n+1})$ of the $n$-holed disk is of the form 
$s(T_{\gamma})=T_{\gamma}\prod_{i}T_{c_i}^{\varepsilon_i}$, 
where $\varepsilon_i=\pm 1$ and 
$T_{c_i}$ are Dehn twists along boundary components of $\Sigma_{0,n+1}$.  
Therefore $s(T_{\gamma}^k)\in  M(\Sigma_{0,n+1})[k]$. 
This proves the claim.   
\end{proof}

\begin{remark}
In a similar way we can identify $B_n[2k]$ with 
the subgroup of $B_n$ generated by Dehn twists 
along curves encircling precisely $2$ punctures.  
\end{remark}

\vspace{0.2cm}\noindent
The main result of this section is the following one which implies 
immediately Theorem \ref{intersection} in the Introduction:

\begin{theorem}\label{intersection2}
The intersection of $B_n\{k\}$ over an infinite set of 
integers $k$ is trivial. 
\end{theorem}
\begin{proof}
When $n=2$, the claim holds trivially. Assume henceforth that $n\geq 3$. 
We embed $\Sigma_{0,n+1}$ into the closed orientable surface 
$\Sigma_{n+1}$ of genus $n+1$ by gluing a one-holed torus 
along each boundary component. 
Let $M_{n+1}$ denote the mapping class group of $\Sigma_{n+1}$.   
According to \cite{PR} the homomorphism 
$i: M(\Sigma_{0,n+1})\to M_{n+1}$ induced by the inclusion of surfaces 
is injective.

\vspace{0.2cm}\noindent
We have obviously $i(M(\Sigma_{0,n+1}))[k]\subset M_{n+1}[k]$ and 
so Lemma \ref{framed} implies that 
$i(s(B_n\{k\}))\subset M_{n+1}[k]$.

\vspace{0.2cm}\noindent
Recall that in \cite{BHMV} 
the authors defined the TQFT functor $\mathcal V_{p}$, for every $p\geq 3$  
and a primitive root of unity $A$ of order $2p$.
These TQFT should correspond to the so-called 
$SU(2)$-TQFT, for even $p$ and to 
the $SO(3)$-TQFT, for odd $p$ (see also \cite{LW} for another 
$SO(3)$-TQFT). 

\begin{definition}\label{qrep}
Let $p\in\Z_+$, $p\geq 3$, such that $p\not\equiv 2({\rm mod}\: 4)$. 
The  quantum representation $\rho_p$ 
is the projective representation of  the mapping class group 
associated to the TQFT $\mathcal V_{\frac{p}{2}}$ for even $p$
and $\mathcal V_{p}$ for odd $p$,   
corresponding to the following choices of the root of unity: 
\[ A_p=\left\{\begin{array}{ll}
-\exp\left(\frac{2\pi i}{p}\right), & {\rm if}\: p\equiv 0({\rm mod}\: 4);\\
-\exp\left(\frac{(p+1)\pi i}{p}\right) , & {\rm if}\: p\equiv 1({\rm mod}\: 2).\\
\end{array}\right. \]
\end{definition}

\vspace{0.2cm}\noindent
Notice that $A_p$ is a primitive root of unity of order $p$ when $p$ is even 
and of order $2p$ otherwise.  
Consider now the projective quantum representations $\rho_p$ of 
$M_{n+1}$ from Definition \ref{qrep}.  
According to \cite{A,FWW}, for any infinite set of 
even integers $A$ we have $\cap_{p\in A}\ker \rho_{p}=1$. 
However, the proof given in \cite{FWW} for the the $SU(2)$-TQFT 
extends without any essential modification 
to the $SO(3)$-TQFTs ${\mathcal V}_p$ defined in \cite{BHMV}.  
Therefore $\cap_{p\in A}\ker \rho_{p}=1$, for any infinite set 
of integers $A$. Recall that $\rho_p$ was defined in 
Definition \ref{qrep} only when $p\not\equiv 2({\rm mod}\: 4)$. 

\vspace{0.2cm}\noindent
The eigenvalues of a Dehn twist in the TQFT $\mathcal V_p$ i.e.,   
the entries of the diagonal $T$-matrix  
are of the form $\mu_l=(-A_p)^{l(l+2)}$, where $l$ belongs to the 
set of admissible colors (see \cite{BHMV}, 4.11). 
The set of admissible colors 
is $\{0,1,2,\ldots,\frac{p}{2}-2\}$, for even $p$ and is 
$\{0,2,4,\ldots, p-3\}$ for odd $p$.  
Therefore the order of the image of a Dehn twist by $\rho_p$ 
is $p$. 

\vspace{0.2cm}\noindent
Therefore, $M_{n+1}[p]\subset \ker \rho_p$, for any 
$p$. Then the previous results and the injectivity of $i\circ s$ imply that 
$\cap_{p\in A} B_n\{p\}=1$, for any infinite set $A$. 
\end{proof}

\begin{remark}
The weaker statement that $\cap_{k\in\Z-\{0\}}B_n[k]=1$ 
can also be shown by means of the 
residually finiteness of the braid group.
This was independently observed by Ivan Marin. 
Consider a residually finite group $G$ having  a finite 
system of generators $S$. 
Let $G[k]$ be the normal subgroup 
of $G$ generated by $s^k$, with $s\in S$. 
We claim that $\cap_{k\in\Z-\{0\}}G[k]=1$.
In fact, suppose that there exists 
$1\neq a\in \cap_{k\in\Z-\{0\}}G[k]$. 
By the residual finiteness of $G$ there exists 
some finite group $F$ and a morphism $f:G\to F$ with $f(a)\neq 1$. 
Now $f(s)^k=1$, for every $s\in S$, 
where $k$ is the order of the finite group $F$. This shows that 
$f$ factors through $G/G[k]$, which implies that $f(a)=1$, contradicting 
our assumption. This proves the claim.  
In particular, this implies that 
\[ \cap_{k\in\Z-\{0\}}B_n[k]= \cap_{k\in\Z-\{0\}}B_n\{k\}=1,\:\:\: \cap_{k\in\Z-\{0\}}M_n[k]=1.\]
However, it seems that  the proof of the stronger claim of 
Theorem \ref{intersection2} uses in an essential way the asymptotic 
faithfulness of the quantum representations.  
\end{remark}

\begin{proposition}\label{false}
Conjecture \ref{unu} is false for $n\geq 5$, for all but finitely many 
$q$ of even order. 
\end{proposition}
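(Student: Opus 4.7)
The plan is to combine the nonfaithfulness of the generic Burau representation for $n\geq 5$ with Theorem~\ref{intersection} just established. By the results of Moody, Long--Paton and Bigelow, there exists a nontrivial braid $w\in B_n$ with $\beta_t(w)=I$ in $GL(n-1,\Z[t,t^{-1}])$; specializing $t\mapsto -q$ yields $\beta_{-q}(w)=I$ for every complex parameter $q$, and in particular for every root of unity. Thus $w$ is a universal kernel element, and the only remaining task is to ensure that $w$ avoids $B_n[k]$ for cofinitely many $k$ of the relevant parity.

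By Theorem~\ref{intersection}, $\bigcap_k B_n[k]$ is trivial, so there exists some $k_0$ with $w\notin B_n[k_0]$. This is then upgraded by the elementary monotonicity $B_n[k]\subset B_n[k_0]$ whenever $k_0\mid k$ (since $g_j^k=(g_j^{k_0})^{k/k_0}$ lies in the normal closure of $\{g_j^{k_0}\}$). Consequently $w\notin B_n[k]$ for every multiple of $k_0$, yielding an arithmetic progression of even $k$ for which $w$ is outside $B_n[k]$ but still in $\ker\beta_{-q}$ for $q$ a primitive $k$-th root of unity. This already forces $\ker\beta_{-q}\supsetneq B_n[k]$ and disproves Conjecture~\ref{unu} for an infinite family of even orders.

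The main obstacle is promoting this ``infinite family'' to the sharper ``all but finitely many $q$ of even order'' in the statement. One cannot simply take $k_0=2$, since $\beta_1$ factors through the standard faithful representation of the symmetric group, which forces any nontrivial $w\in\ker\beta_t$ to be a pure braid and hence to lie in $B_n[2]$. The cleanest route around this is to exhibit, using an explicit Bigelow-type witness $w$ (whose word length and exponent structure are controlled), a sharp finiteness statement on $\{k\text{ even}:w\in B_n[k]\}$, or equivalently to use a judicious family of witnesses rather than a single one so that every sufficiently large even $k$ is covered by the corresponding $k_0$. This last step, rather than the specialization/Theorem~\ref{intersection} input, is where I expect the real work of the proof to lie.
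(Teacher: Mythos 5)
Your first half coincides with the paper's argument: take a single nontrivial $w$ in the kernel of the generic Burau representation (Moody, Long--Paton, Bigelow), observe that specialization puts $\beta_{-q}(w)=1$ for every root of unity, and then play this off against the groups $B_n[k]$. The difficulty you flag at the end is a real one, and your proposed escape route is not the one the paper takes. Using only the bare statement of Theorem~\ref{intersection} ($\bigcap_k B_n[k]=1$) plus the monotonicity $B_n[k]\subset B_n[k_0]$ for $k_0\mid k$, you indeed get only an infinite arithmetic progression of even orders on which the conjecture fails --- not ``all but finitely many.'' An explicit quantitative analysis of a Bigelow-type witness, or a family of witnesses, is not what is needed and would be quite painful.

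The missing ingredient is the \emph{elementwise} form of asymptotic faithfulness, which is exactly what the proof of Theorem~\ref{intersection} already provides. There one shows $B_n[2k]\subseteq \ker\rho_k$, where $\rho_k$ is the quantum representation of the ambient mapping class group, and the Andersen / Freedman--Walker--Wang theorem is not merely that $\bigcap_k\ker\rho_k$ is trivial but that for each fixed nontrivial element there exists $k_0$ with $\rho_k$ nontrivial on it for \emph{all} $k\geq k_0$. Applied to the image of your fixed $w$, this gives $w\notin B_n[2k]$ for every sufficiently large $k$, i.e.\ for all but finitely many even orders, while $w\in\ker\beta_{-q}$ at every such order; this is precisely the statement. (The paper's own write-up phrases this as ``if the conjecture held for infinitely many even orders then $w\in\bigcap_k B_n[2k]=1$,'' which, read literally, also requires this eventual form rather than the bare triviality of the total intersection --- membership in infinitely many of the $B_n[2k]$ does not by itself imply membership in all of them.) So the fix is to go back into the proof of Theorem~\ref{intersection} and extract the stronger conclusion it actually yields, rather than to engineer better witnesses. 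Your parenthetical remark about $k_0=2$ (any generic kernel element is a pure braid, hence lies in $B_n[2]$) is correct and explains why some lower bound on $k$ is unavoidable.
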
  
\begin{proof}
One knows by results of Bigelow (\cite{Big}), Moody (\cite{Moody}), 
Long and Paton (\cite{LP})
that for $n\geq 5$ the (generic i.e., for a formal indeterminate $q$) 
Burau representation  $\beta$  into 
$GL(n-1,\Z[q,q^{-1}])$ is not faithful. Let $a\in B_n$ 
be such a non-trivial element in the kernel of $\beta$. 

\vspace{0.2cm}\noindent
Suppose that Conjecture \ref{unu} is true for infinitely 
many primitive roots of unity $q$ of even order.  
Then $a$ should belong to the intersection of kernels of 
all $\beta_q$, over all roots of unity $q$. 

\vspace{0.2cm}\noindent
By Theorem \ref{intersection} we have 
$\cap_{k=2}^{\infty} B_n[2k]=1$. 
If $\ker\beta_q=B_n[2k]$ for infinitely many roots of unity 
$q$ of even order $2k$, it follows  
that $a\in \cap_{k=2}^{\infty}B_n[2k]=1$, 
which is a contradiction. 
\end{proof}

\begin{remark}
The proof of the asymptotic faithfulness in \cite{FWW} is given for one 
primitive root of unity $q$ of given order. 
However, this proof works as well for any other 
primitive roots of unity, by using a Galois conjugacy. 
\end{remark}

\section{The image of Burau's representation of $B_3$ at roots of unity}

\subsection{Finite and exceptional quotients of $B_3$}
The aim of this section is to understand the image of  
Burau's representation $\beta_{-q}(B_3)$ at small roots of unity and,  
in particular, to find  an explicit presentation of it. 
Notice that we will consider the representation at the root 
$-q$, instead of $q$, for reasons that will appear later.

\vspace{0.2cm}\noindent
If one is interested to know whether 
$\beta_{-q}(B_3)$ is discrete one should first analyze 
the case when the image can be conjugated into $U(2)$, 
and then rescale it into  $SU(2)$.  
There the discreteness is equivalent to the finiteness of the image. 
The finiteness of the Jones representation of $B_3$ 
was completely characterized in \cite{Jones}.  
Jones studied the case where the  roots of unity $-q$
have the form $-q=\exp\left(\frac{2\pi i}{k}\right)$, but the 
Galois conjugation yields isomorphic groups so that the discussion 
in \cite{Jones} is complete.  The only cases where the image of 
the Jones representation of $B_3$ at $-q$ is finite 
is when $-q$ is a primitive root of unity of order $1,2,3,4,6$ or $10$.  
Moreover, Burau's representation is equivalent to the Jones representation 
only when the root of unity is neither $-1$ nor a primitive 
third root of unity. These excluded cases should be treated separately. 
For the sake of completeness we sketch the proofs below.

\begin{proposition}\label{finiteim}
Let $q$ be a primitive root of unity of order 
$n\in \{2,3,4,5\}$. Then $\beta_{-q}(B_3)$ is a finite group 
with the group presentation:
\[ \langle g_1,g_2 \:\: | \:\: g_1g_2g_1=g_2g_1g_2,\:\: g_1^n=g_2^n=1 \rangle. \]
\end{proposition}
\begin{proof}
Set $B_k(n)=B_k/B_k[n]$. Then Burau's representation $\beta_{-q}$ factors 
through $B_3(n)$ when $q$ is a primitive root of unity of order $n$.

\vspace{0.2cm}\noindent
Coxeter gave in \cite{cox1} the exhaustive list of the groups 
$B_k(n)$ which are finite, together with their respective description
(see also \cite{cox2,cox3}). The finite ones are those for which
$(k-2)(n-2) < 4$. Namely, when $k=3$, there is the following list:  
\begin{enumerate}
\item $B_3(2)$ is the symmetric group $S_3$; 
\item $B_3(3)$ is isomorphic to $SL(2, {\Z}/3\Z)$ 
(or the binary tetrahedral group $\Delta(2,3,3)$, see section \ref{tri} 
for definitions) and has order $24$;
\item $B_3(4)$ is isomorphic to the triangle group 
$\Delta(2,3,4)$ and has order $96$;
\item $B_3(5)$ is isomorphic to  ${GL}(2,{\Z}/5\Z)$ and 
has order $600$. 
  \end{enumerate}

\vspace{0.2cm}\noindent
Set $N(n)\subset B_3(n)$ for the group generated by the image of 
$(g_1g_2)^3$, which is a generator of the center of $B_3$. 
By a direct computation we show that $\beta_{-q}((g_1g_2)^3)=-q^3 \mathbf 1$ 
is a scalar matrix and thus $\beta_{-q}$ induces a well-defined 
homomorphism $\tilde\beta_{-q}: B_3(n)/N(n)\to PGL(2,\C)$. 
Furthermore, we have the following commutative diagram: 
\[\begin{array}{ccccccc}
1  \to  & N(n)& \to & B_3(n)&  \to &  B_3(n)/N(n)& \to 1 \\
        & \downarrow& & \beta_{-q}\downarrow & & \tilde\beta_{-q}\downarrow & \\
1  \to  & \C^*& \to & GL(2,\C)&  \to &  PGL(2,\C)& \to 1
\end{array}
\]
It follows that $\beta_{-q}((g_1g_2)^3)=-q^3 \mathbf 1$ 
has order $o(n)$, where 
$o(2)=1, o(3)=2, o(4)=4, o(5)=10$. Since the order of $N(n)$ is 
also $o(n)$ it follows that the restriction of $\beta_{-q}$ at $N(n)$ is 
injective. 

\vspace{0.2cm}\noindent
From the previously cited results of Coxeter we derive that: 
\[
B_3(n)/N(n)=\left\{\begin{array}{ll}
S_3, & {\rm if } \: n=2; \\
A_4, & {\rm if } \: n=3; \\
S_4, & {\rm if } \: n=4; \\
A_5, & {\rm if } \: n=5. \\ 
\end{array}\right.
\]
where $S_m$ and $A_m$ denote the symmetric and 
the alternating group on $m$ elements, respectively. 

\vspace{0.2cm}\noindent
A direct inspection shows that the image of $\tilde\beta_{-q}$ is neither  
trivial nor of order 2 and thus $\tilde\beta_{-q}$ should be injective since 
alternating groups $A_n$ are simple if $n\neq 4$. 
Alternatively, we can use directly the computations made by Jones 
in \cite{Jones}.
This implies that $\beta_{-q}$ is injective as well and, in particular, 
$\beta_{-q}(B_3)$ has the given presentation, establishing the claim. 
\end{proof}
 
\vspace{0.2cm}\noindent
The two excluded cases which have to be treated separately  
are as follows: 
\begin{proposition}\label{except}
\begin{enumerate}
\item If $q=1$, then $\beta_{-q}(B_3)$ is the subgroup $SL(2,\Z)$ of 
$GL(2,\C)$ with the presentation:
\[ \langle g_1,g_2 \:\: | \:\:  g_1g_2g_1=g_2g_1g_2, \: (g_1g_2)^6=1\rangle. \]
\item 
If $q$ is a primitive $6$-th root of unity, then the representation 
$\beta_{-q}$ of $B_3$ is not completely reducible and its image 
 $\beta_{-q}(B_3)$ has the presentation:
 \[ \langle g_1,g_2 \:\: | \:\:  g_1g_2g_1=g_2g_1g_2, \: g_1^6=1, \:
\: g_1^{-2}g_2=g_2g_1^{2} \rangle. \]
\end{enumerate}
\end{proposition}
\begin{proof}
The group $\beta_{-1}(B_3)$ is 
generated by the images of the generators, namely $\left(\begin{array}{cc}
1 & 1 \\
0  & 1 \\
\end{array}
\right)$ and 
$  
\left(\begin{array}{cc}
1 & 0 \\
-1 & 1 \\
\end{array}
\right)$, and thus it coincides with $SL(2,\Z)$ 
and the presentation follows. 

\vspace{0.2cm}\noindent
Let $q$ be a primitive $6$-th root of unity, so that   
$t=-q$ is a primitive third root of unity.  
Let $V=\left(\begin{array}{cc}
-t & 0 \\
1 & 1 \\
\end{array}\right)$. We denote by $\Gamma$ the subgroup 
$V^{-1}\rho_{t}(B_3)V$ of $GL(2,\C)$. 
Then the matrices $h_i=V^{-1}\beta_{t}(g_i)V$ are both 
upper triangular, namely: 
\[ h_1= \left(\begin{array}{cc}
1 & -t^2\\
0 & -t \\
\end{array}\right), \:\: 
h_2= \left(\begin{array}{cc}
1 & 0 \\
0 & -t \\
\end{array}\right).
\]
We have therefore:
\[ h_1h_2^{-1}= \left(\begin{array}{cc}
1 & t\\
0 & 1 \\
\end{array}\right), \:\: 
h_2^{-1}h_1= \left(\begin{array}{cc}
1 & t+1 \\
0 & 1 \\
\end{array}\right), \:\: 
h_2h_1^{-1}h_2^{-1}h_1= 
\left(\begin{array}{cc}
1 & 1 \\
0 & 1 \\
\end{array}\right).
\] 
Since the diagonal of the generators $h_i$ is $(1, -t)$ 
the group $\Gamma$ is contained in the group  
of matrices: 
\[ \tilde{\Gamma}=\left\{\left(\begin{array}{cc}
1 & r + st\\
0 & (-t)^m \\
\end{array}\right), \:\: m\in \Z/6\Z, r,s\in \Z\right\}\subset GL(2,\C).
\] 
Any matrix in $\tilde\Gamma$ can be written as a product  
\[h_2^m(h_1h_2^{-1})^s (h_2h_1^{-1}h_2^{-1}h_1)^r,\] 
such that $\Gamma$ coincides with $\tilde\Gamma$.  

\vspace{0.2cm}\noindent
Observe now that the map $p:\Gamma\to \Z/6\Z$ defined by: 
\[ p\left(\begin{array}{cc}
1 & r + st\\
0 & (-t)^m \\
\end{array}\right)=m \in \Z/6\Z
\]
is a well-defined homomorphism. Then we obtain the exact sequence:
\[ 1\to \Z^2\to \Gamma \to \Z/6\Z \]
where the inclusion $i:\Z^2\to \Gamma$ is given by 
$i(1,0)=h_1h_2^{-1}$ and 
$i(0,1)=h_2h_1^{-1}h_2^{-1}h_1$. Thus $\Gamma$ is a polycyclic group.
Denote by $u=h_1h_2^{-1}$ and $v=h_2h_1^{-1}h_2^{-1}h_1$ the two generators 
of the kernel of $p$.  
We obtain an explicit presentation of $\Gamma$ out of 
one of $\Z^2$ by  adding the generator $h_2$  of order $6$ 
whose image generates $p(\Gamma)$ and the  
relations which describe its action by conjugacy 
on $\Z^2$. Specifically, we have:
\[ \Gamma=\langle u,v, h_2 | uv=vu, h_2^6=1, 
h_2uh_2^{-1}=v^{-1}, h_2vh_2^{-1}=uv \rangle 
\]
Now, in order to  describe $\Gamma$ as a quotient of $B_3$ 
we add the redundant generator $h_1$ and  the braid 
relation and express $u,v$ in terms of the $h_i$.  The conjugacy relations 
are  now consequences of the braid relation while the 
commutativity relation is equivalent to 
$h_2h_1^2=h_1^{-2}h_2$. This gives the desired presentation for 
the image $\beta_{-q}(B_3)$. 
\end{proof}

\subsection{Discrete subgroups of $PU(1,1)$}\label{discret}
The aim of this section is to find whether the image of 
Burau'sepresentation $\beta_{-q}$ is a discrete subgroup in $PU(1,1)$. 
The main result of this section is the identification of 
the image of a free subgroup of $PB_3$ by Burau's representation 
with a group generated by two rotations. Then 
some results of Knapp, Mostow and Deraux (\cite{Deraux,Knapp,Mo}) give necessary and sufficient 
conditions for such a subgroup to be discrete. 

\vspace{0.2cm}\noindent
Let us denote by $A=\beta_{-q}(g_1^2)$ and $B=\beta_{-q}(g_2^2)$ and 
$C=\beta_{-q}((g_1g_2)^3)$. 
As is well-known $PB_3$ is isomorphic to the direct product 
${\mathbb F}_2\times \Z$, where ${\mathbb F}_2$ is freely generated by 
$g_1^2$ and $g_2^2$ and the factor $\Z$ is the center of $B_3$ 
generated by $(g_1g_2)^3$. 

\vspace{0.2cm}\noindent
It is simple to check that: 
\[ A=\left(\begin{array}{cc}
q^2 & 1+q \\
0  & 1 \\
\end{array}
\right), \,\, 
B=  
\left(\begin{array}{cc}
1 & 0 \\
-q-q^2 & q^2 \\
\end{array}
\right), \,\,
C=  
\left(\begin{array}{cc}
-q^3 & 0 \\
0 & -q^3 \\
\end{array}
\right).
\]

\vspace{0.2cm}\noindent
Recall that $PSL(2,\Z)$ is the quotient of $B_3$ by its center. 
Since $C$ is a scalar matrix  
the homomorphism $\beta_{-q}:B_3\to GL(2,\C)$ 
factors to a homomorphism  
$PSL(2,\Z)\to PGL(2,\C)$.

\vspace{0.2cm}\noindent
We will be concerned below with  the subgroup 
$\Gamma_{-q}$ of $PGL(2,\C)$ generated by  the images of 
$A$ and $B$ in  $PGL(2,\C)$.
When  $\beta_{-q}$ is unitarizable,  
the group $\Gamma_{-q}$ can be viewed as a subgroup of 
the complex-unitary  group $PU(1,1)$. 
Specifically,  consider the action of $\Gamma_{-q}$ on the 
projective line $\C P^1$. Let $V$ be the matrix in the proof of 
Proposition \ref{jonesburau}, namely:   
 $V=\left(\begin{array}{cc}
a & \frac{1}{(1-q)a} \\
0  & \frac{1}{a} \\
\end{array}
\right)$, for $-q\neq -1$, where 
$a$ is given by $(1-q)^3C a^2=1-q+q^2$ and $C\neq 0$ is chosen such that 
$\rho_{-q,C}$ is unitarizable. 
Denote the conjugate $V^{-1}ZV$ by $\overline{Z}$. 
We have then: 
\[ \overline{A}=\left(\begin{array}{cc}
q^2 & 0 \\
0  & 1 \\
\end{array}
\right), \, 
\overline{B}=\left(\begin{array}{cc}
\frac{1+q^2}{1-q} & \frac{1+q^3}{(1-q)^2a^2} \\
-q(1+q)a^2  & -\frac{q+q^3}{1-q} \\
\end{array}
\right), \,\, \overline{AB}=\left(\begin{array}{cc}
\frac{q^2-q^4}{1-q} & \frac{q^2+q^5}{(1-q)^2a^2} \\
-q(1+q)a^2  & -\frac{q+q^3}{1-q} \\
\end{array}
\right).
\]
since $AB=\left(\begin{array}{cc}
-q^3-q^2-q & q^2+q^3\\
-q-q^2  & q^2 \\
\end{array}
\right)
$.

\vspace{0.2cm}\noindent
We know that $V^{-1}\beta_{-q}V=\sigma\otimes \rho_{-q,C}$ 
and $\sigma\otimes \rho_{-q,C}$ is unitarizable simply 
by rescaling. In fact    
$\lambda (\sigma\otimes \rho_{-q,C})$ is complex-unitary 
(for those values of $-q$ considered in Proposition \ref{reducible}) when 
$\lambda$ verifies the condition $\lambda^2q=1$. 
Since scalar rescaling does not affect the 
class of the matrix in $PU(1,1)$ we can work directly with the 
classes of the matrices $\overline{A}$ and  
$\overline{B}$ in $PU(1,1)$.  

\begin{definition}
Let $q=\exp(i\alpha)$, with 
$\alpha\in \left(-\frac{\pi}{3},\frac{\pi}{3}\right)$. 
The group $\Gamma_{-q}\subset PU(1,1)$ is the subgroup 
generated by the classes $\beta_{-q}(g_1^2)$ and 
$\beta_{-q}(g_2^2)$, namely  the classes of matrices $\overline{A}, 
\overline{B}$ in $PU(1,1)$.
\end{definition}

\vspace{0.2cm}\noindent
It appears that the  search for discrete subgroups in the 
complex-unitary case is more interesting than in the unitary case 
since we can find infinite discrete subgroups 
of $PU(1,1)$. The main result  in this section is the following: 
\begin{proposition}\label{discrete}
Let $q=\exp(i\alpha)$, with $\alpha\in \left(-\frac{\pi}{3},\frac{\pi}{3}\right)$.  
Then the group $\Gamma_{-q}$ is a discrete subgroup 
of $PU(1,1)$ if and only if  $q=\exp\left(\frac{\pm 2\pi i}{n}\right)$, for  
$n\in \Z_+$ and $n\geq 7$. 
\end{proposition}
\begin{proof}
Recall that $PU(1,1)$ is a subgroup of $PGL(2,\C)$ which keeps 
invariant (and hence acts on) the unit disk ${\mathbb D}\subset \C P^1$. 
The action of $PU(1,1)$ on $\mathbb D$ is conjugate to the 
action of the isomorphic group $PSL(2,\R)$ on the upper half plane. 
The former is simply the action by isometries on the disk model of the 
hyperbolic plane. 

\vspace{0.2cm}\noindent
The key point of our argument is the existence of a fundamental domain 
for the action of $\Gamma_{-q}$ on $\mathbb D$. 
We will look to the fixed points of the isometries 
$\overline{A}, \overline{B}, \overline{AB}$ on the hyperbolic disk  
$\mathbb D$. 
We have the following list: 
\begin{enumerate}
\item $\overline{A}$ has  the fixed point set $\{0,\infty\}$ in $\C P^1$, and 
thus a unique fixed point in $\mathbb D$, namely its center $O$. 
\item $\overline{B}$ has the fixed point set  
$\left\{-\frac{1}{(1-q)a^2}, -\frac{q^2-q+1}{q(1-q)a^2}\right\}\subset \C P^1$ and 
thus a unique fixed point in $\mathbb D$, namely 
$P=-\frac{q^2-q+1}{q(1-q)a^2}$.
In fact, if $\cos(\alpha+\pi)\in[-\frac{1}{2}, -1]$, then 
\[ \left|\frac{q^2-q+1}{q(1-q)a^2}\right|=
|1-q||a|^2=\sqrt{|1-q+q^2|}=\sqrt{1+2\cos(\alpha+\pi)}\in [0,1].\] 
\item $\overline{AB}$ has  the fixed point set 
$\left\{-\frac{q}{(1-q)a^2}, -\frac{q^2-q+1}{(1-q)a^2}\right\}\subset \C P^1$ and 
thus a unique fixed point in $\mathbb D$, namely 
$Q=-\frac{q^2-q+1}{(1-q)a^2}$.
\end{enumerate}

\vspace{0.2cm}\noindent
We have now the following lemma, whose proof is postponed 
a few lines later: 
\begin{lemma}\label{fdtdom}
The elements $\overline{A}, \overline{B}$ and $\overline{AB}$ of 
$PU(1,1)$ are rotations of the same angle $2\alpha$ centered 
at the three vertices of the equilateral geodesic triangle  
$\Delta=OPQ$ in $\mathbb D$, whose angles are equal to $\alpha$. 
\end{lemma}
\vspace{0.2cm}\noindent
Eventually we state the following result of Knapp  from \cite{Knapp},  
later rediscovered by  Mostow (see \cite{Mo}) and Deraux 
(\cite{Deraux}, Theorem 7.1): 
\begin{lemma}
The  three rotations of angle $2\alpha$ 
in $\mathbb D$ around the vertices of an 
equilateral  hyperbolic triangle $\Delta$ of angles $\alpha >0$ 
generate a discrete subgroup of 
$PU(1,1)$ if and only if 
$\alpha=\frac{2\pi}{n}$, with $n\in \Z_+$ and $n\geq 7$.  
\end{lemma}

\vspace{0.2cm}\noindent
Notice that the existence of a 
hyperbolic triangle of angles equal to $\alpha$ 
requires that $n\geq 7$. 

\vspace{0.2cm}\noindent
The two lemmas from above yield the result claimed in 
Proposition \ref{discrete}. 
\end{proof}

\begin{proof}[Proof of Lemma \ref{fdtdom}]
We know from above that $\overline{A}, \overline{B}$ and $\overline{AB}$ are 
elliptic elements of $PU(1,1)$. Actually all of them are rotations of  
angle $\pm 2\alpha$: 
\begin{enumerate}
\item $\overline{A}(z)=q^2z$ and hence $\overline{A}$ is the 
counterclockwise rotation of angle 
$2\alpha$ around $O$;
\item $\overline{B}$ is conjugate to $\overline{A}$ and 
thus is a rotation of angle $\pm 2\alpha$   around $P$;
\item $\overline{AB}$ has the 
eigenvalues $-q^3$ and $-q$, which are 
distinct since $q^2\neq 1$,  and so is diagonalizable. Therefore  
$\overline{AB}$ is a rotation of 
angle $\pm 2\alpha$ around $Q$. 
\end{enumerate}

\vspace{0.2cm}\noindent
Consider now the geodesic triangle $\Delta=OPQ$ in $\mathbb D$. 
The angle $\widehat{POQ}$ at $O$ equals $\alpha$ since 
$Q=qP$. Since the argument of $q$ is acute it follows that 
the orientation of  the arc $PQ$ is counterclockwise. 
Moreover, this shows that  $d(O,P)=d(O,Q)$, 
where $d$ denotes the hyperbolic distance in $\mathbb D$ and  hence 
we obtain the equality of angles $\widehat{OPQ}=\widehat{OQP}$. 

\vspace{0.2cm}\noindent
Let us introduce the element $D=\beta_{-q}(g_2)$, which
verifies $D^2=B$. Then $\overline{D}^2=\overline{B}$. We can compute  
\[ \overline{D}=\left(\begin{array}{cc}
\frac{1}{1-q} & \frac{q^2-q+1}{(1-q)^2a^2} \\
-qa^2  & \frac{q^2}{1-q} \\
\end{array}
\right).
\]
We know that $\overline{D}$ 
is a rotation of angle $\pm \alpha$  around $P$ 
since is conjugated to $\beta_{-q}(g_1)$. 
We can check that $\overline{D}(Q)=0$ and hence $\overline{D}$ 
is the counterclockwise rotation of angle $\alpha$ around $P$ and 
$d(P,Q)=d(P,O)$. Thus all angles of the triangle $\Delta$ are equal to 
$\alpha$. This also shows that $\overline{B}$ is the counterclockwise 
rotation of angle $2\alpha$. 

\vspace{0.2cm}\noindent
Since both $\overline{A}$ and $\overline{B}$ are counterclockwise 
rotations of angle $2\alpha$ it follows that $\overline{AB}$ is 
also the counterclockwise rotation of angle $2\alpha$. 
\end{proof}

\subsection{Triangle groups as images of a free pure braid subgroup}\label{tri}
The aim of this section is to obtain finite presentations for the groups $\Gamma_{-q}$. 
Discrete subgroups of $PU(1,1)$ have explicit 
presentations by means of a fundamental 
domain for their action on the hyperbolic disk 
$\mathbb D$. This method leads us to an identification of $\Gamma_{-q}$ with a suitable 
triangle group.   

\vspace{0.2cm}\noindent
Before we proceed we make a short digression on triangle groups.
Let $\Delta$ be a geodesic triangle in the hyperbolic plane of angles 
$\frac{\pi}{m},\frac{\pi}{n},\frac{\pi}{p}$, so that  
$\frac{1}{m}+\frac{1}{n}+\frac{1}{p}<1$.  
The extended triangle group $\Delta^*(m,n,p)$ is the group of isometries 
of the hyperbolic plane generated by the three reflections 
$R_1,R_2,R_3$ with respect to the edges  of $\Delta$. It is well-known that 
a presentation of  $\Delta^*(m,n,p)$ is given by 
\[  \Delta^*(m,n,p)=\langle R_1,R_2,R_3\; ; \; 
R_1^2=R_2^2=R_3^2=1,\; (R_1R_2)^m=(R_2R_3)^n=(R_3R_1)^p=1\rangle. \]
The second type of relations have a simple geometric meaning. 
In fact, the product of the reflections with respect to two 
adjacent edges is a 
rotation by the angle which is twice the angle between those edges.
The subgroup $\Delta(m,n,p)$ 
generated by the rotations $a=R_1R_2$, $b=R_2R_3$, $c=R_3R_1$ 
is a normal subgroup of index 2, which coincides  
with the subgroup of isometries preserving the orientation. 
One calls $\Delta(m,n,p)$ the triangle (also called triangular, or 
von Dyck) group 
associated to $\Delta$.
Moreover, the triangle group has the 
presentation: 
\[ \Delta(m,n,p)=\langle a,b,c\; ; \; 
a^m=b^n=c^p=1, abc=1\rangle. \]
Observe that $\Delta(m,n,p)$ also makes sense  when $m,n$ or $p$ are 
negative integers, by interpreting the associated generators as 
clockwise rotations. 
The triangle $\Delta$ is a fundamental domain for the action 
of $\Delta^*(m,n,p)$ on the hyperbolic plane. Thus 
a fundamental domain for $\Delta(m,n,p)$ consists of the 
union $\Delta^*$ of $\Delta$ with the reflection of $\Delta$ in one of its 
edges.

\begin{proposition}\label{even}
Let $m<k$ be such that ${\rm gcd}(m,k)=1$ where $k\geq 4$. 
Then the group 
$\Gamma_{-\exp\left(\frac{\pm 2m\pi i}{2k}\right)}$ is a triangle 
group with  the presentation:  
\[ 
\Gamma_{-\exp\left(\frac{\pm 2m\pi i}{2k}\right)}=\langle A,B; A^k=B^k=(AB)^k=1\rangle. \]
\end{proposition}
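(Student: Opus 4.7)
The plan is to identify $\Gamma_{-q}$ with the abstract von Dyck triangle group $T = \langle a, b \mid a^k = b^k = (ab)^k = 1 \rangle$ by constructing a natural surjection and then verifying that it is an isomorphism using the hyperbolic geometry set up in the preceding proposition.

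For the surjection, I would invoke the conclusion of the preceding proof: after conjugation by $V$, the generators $\overline{A}$, $\overline{B}$ and their product $\overline{AB}$ are counterclockwise rotations of the same angle $\equiv 2m\pi/k \pmod{2\pi}$ in the Poincar\'e disk $\mathbb{D}$, about the three vertices $O$, $P$, $Q$ of an equilateral hyperbolic triangle $\Delta$. Since $\gcd(m,k) = 1$, each of these rotations has order exactly $k$ in $PU(1,1)$. Consequently the three defining relations of $T$ are satisfied in $\Gamma_{-q}$, producing a surjective homomorphism $\pi \colon T \twoheadrightarrow \Gamma_{-q}$ with $a \mapsto A$, $b \mapsto B$.

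For injectivity, the natural tool is the Poincar\'e polygon theorem. One forms the quadrilateral $\Delta^*$ by gluing $\Delta$ to its reflection across one of its sides, identifies $\overline{A}$, $\overline{B}$, $\overline{AB}$ as the side-pairings of $\Delta^*$, and checks that the vertex cycles at $O$, $P$, $Q$ reproduce exactly the three power relations $A^k = B^k = (AB)^k = 1$. Poincar\'e's theorem then certifies that these are a complete set of defining relations on these generators, so $\pi$ is an isomorphism. Alternatively, having already obtained discreteness of $\Gamma_{-q}$ in $PU(1,1)$ from the preceding proposition (via the Knapp--Mostow--Deraux lemma), one can argue by comparing hyperbolic covolumes: the area of the polygon $\Delta^*$ matches the covolume of a standard faithful discrete action of $T$ on $\mathbb{D}$, which forces $\ker \pi$ to be trivial.

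The main obstacle is this injectivity step. Verifying the vertex-cycle conditions carefully, with the correct orientations and without hidden identifications, requires $\Delta$ to be a genuinely non-degenerate hyperbolic triangle, and the side-pairings must be tracked explicitly. In borderline configurations where $\Delta$ approaches degeneracy (for instance when $m$ is close to the critical value making the total vertex angle equal to $\pi$ and $\Delta$ becomes Euclidean), one may need to supplement the geometric argument with the Hopfian property of $T(k,k,k)$ --- these triangle groups are residually finite, so any self-surjection is an isomorphism --- to rule out a nontrivial kernel.
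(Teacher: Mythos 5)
There is a genuine gap: your argument only covers the case $m=1$ (where it essentially coincides with the paper's), and the mechanism you propose for general $m$ cannot work. The geometric input you rely on --- the equilateral hyperbolic triangle $\Delta$ with the three generators acting as rotations by $2\alpha$ about its vertices, and hence the Poincar\'e polygon theorem applied to the rhombus $\Delta^*$, or a covolume comparison --- requires $\alpha=\frac{m\pi}{k}$ to satisfy $\alpha<\frac{\pi}{3}$ for $\Delta$ to be hyperbolic at all, and by the Knapp--Mostow--Deraux criterion quoted in the preceding proposition the group generated by these rotations is discrete in $PU(1,1)$ \emph{only} when $\alpha=\frac{2\pi}{n}$ for an integer $n$. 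For a general $m$ coprime to $k$ (say $m=3$, $k=5$) the group $\Gamma_{-q}$ is simply not discrete, there is no tessellation, no fundamental domain, and no finite covolume, so both of your injectivity arguments are inapplicable. Your fallback via the Hopfian property does not repair this: residual finiteness of $T(k,k,k)$ would let you conclude from a self-surjection of $T(k,k,k)$, but all you have in hand is a surjection $T(k,k,k)\twoheadrightarrow\Gamma_{-q}$ with no independent reason for $\Gamma_{-q}$ to surject back onto $T(k,k,k)$; the issue is not a ``borderline degeneration'' of $\Delta$ but the outright failure of discreteness for most $m$.

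The paper closes exactly this gap with an algebraic, not geometric, step: a Galois automorphism of the cyclotomic field sends $-\exp\left(\frac{2m\pi i}{2k}\right)$ to $-\exp\left(\frac{\pm 2\pi i}{2k}\right)$ and induces an automorphism of $PGL(2,\C)$ carrying $\Gamma_{-q}$ isomorphically (as an abstract group, without preserving discreteness) onto the $m=1$ group, for which the fundamental-domain argument on $\Delta^*$ gives the stated presentation. You need this reduction, or some substitute for it, before any of your geometric machinery can be brought to bear.
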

\begin{proof}
Denote by $\Delta(\frac{\pi}{\alpha},\frac{\pi}{\alpha},\frac{\pi}{\alpha})$
the group generated by the rotations of angle $2\alpha$ around 
vertices of the triangle $\Delta$ of angles $\alpha$. 
We will use this notation even when $\alpha$ 
is not an integral part of $\pi$ i.e., $\alpha$ cannot be written as 
$\frac{\pi}{k}$, with $k\in\Z$. 
We saw above that  $\Gamma_{-q}$ is isomorphic to 
$\Delta(\frac{\pi}{\alpha},\frac{\pi}{\alpha},\frac{\pi}{\alpha})$. 

\vspace{0.2cm}\noindent
When $\alpha=\frac{2\pi}{2k}$, the group 
$\Delta(\frac{\pi}{\alpha},\frac{\pi}{\alpha},\frac{\pi}{\alpha})$   
is a triangle group, namely it has the rhombus
$\Delta^*$ as a fundamental domain for its action on $\mathbb D$. 
In particular, $\Gamma_{-q}$ is the triangle 
group with the given presentation. 

\vspace{0.2cm}\noindent
For the general case of $\alpha=\frac{2\pi m}{2k}$ where 
$q$ is a primitive $2k$-th root of unity the situation is however quite 
similar. 
There is a Galois conjugation sending $-q$ into 
$-\exp\left(\frac{\pm 2\pi i}{2k}\right)$, which induces 
an automorphism of $PGL(2,\C)$. Although this automorphism does 
not preserve the discreteness it is an isomorphism of 
$\Gamma_{-q}$ onto  $\Gamma_{-\exp\left(\frac{\pm 2\pi i}{2k}\right)}$. 
This settles the claim.  
\end{proof}

\vspace{0.2cm}\noindent
If $n$ is odd $n=2k+1$, then the group $\Gamma_{-q}$ is a quotient of the 
triangle group associated to $\Delta$, which embeds into the group 
associated to some sub-triangle $\Delta'$ of $\Delta$. 

\begin{proposition}\label{odd}
Let $0<m<2k+1$ be such that ${\rm gcd}(m,2k+1)=1$ and $k\geq 3$. 
Then the group 
$\Gamma_{-\exp\left(\frac{\pm 2m\pi i}{2k+1}\right)}$ is  isomorphic 
to the  triangle group  $\Delta(2,3,2k+1)$ and has the 
following presentation (in terms of our generators $A,B$): 
\[ 
\Gamma_{-\exp\left(\frac{\pm 2m\pi i}{2k+1}\right)}=
\langle A,B; A^{2k+1}=B^{2k+1}=(AB)^{2k+1}=1, \,
(A^{-1}B^k)^2=1, \, (B^kA^{k-1})^3=1 \rangle. \]
\end{proposition}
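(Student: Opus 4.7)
\medskip

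\textbf{Proof proposal.} The strategy parallels the even-case proposition, but must account for the fact that the equilateral triangle $\Delta$ with vertex angles $\alpha=2\pi/(2k+1)$ cannot itself be the fundamental domain of a reflection group, since $\pi/\alpha=(2k+1)/2$ is not an integer. Instead, $\Delta$ decomposes by barycentric subdivision into six congruent right triangles with angles $(\pi/2,\pi/3,\pi/(2k+1))$, which are the fundamental tiles of $\Delta^*(2,3,2k+1)$. After reducing to $m=1$ via the same Galois automorphism of $\Q(q)$ used in the preceding proposition (which induces an abstract isomorphism between the two $\Gamma_{-q}$'s, even though it breaks discreteness), I identify $\Gamma_{-q}$ with the orientation-preserving subgroup $\Delta(2,3,2k+1)$ of this reflection group.

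Concretely, connect the centroid $G$ of $\Delta$ to the three vertices $O,P,Q$ and to the midpoints of the opposite sides. Each of the six resulting right triangles is isometric to the fundamental triangle $\Delta'$ of $\Delta(2,3,2k+1)$. Each vertex of $\Delta$ is a $(2k+1)$-vertex of the induced tiling, and since $\gcd(2,2k+1)=1$ we have $\langle A\rangle=\langle A^{k+1}\rangle$, which is the full cyclic rotation group of order $2k+1$ fixing $O$; analogously for $B$ at $P$ and $(AB)^{-1}$ at $Q$. Hence $\Gamma_{-q}\subseteq\Delta(2,3,2k+1)$. For the reverse inclusion it suffices to produce, inside $\Gamma_{-q}$, one rotation of order $2$ around the midpoint of a side of $\Delta$ and one rotation of order $3$ around $G$, because these together with any one of $A,B,(AB)^{-1}$ generate the full triangle group.

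The crux of the argument is to exhibit such elements as explicit words in $A,B$. I claim that $A^{-1}B^{k}$ is the rotation by $\pi$ around the midpoint $M$ of $OP$, and that $B^{k}A^{k-1}$ is the rotation by $2\pi/3$ around $G$. These claims are verified by a direct matrix calculation in the unitarized forms $\overline{A},\overline{B}\in SU(1,1)$ computed in the preceding proof: after normalizing to unit determinant, the trace of $\overline{A^{-1}B^{k}}$ must vanish (half-angle $\pi/2$, hence rotation angle $\pi$) and the trace of $\overline{B^{k}A^{k-1}}$ must equal $\pm 1$ (rotation angle $2\pi/3$). The fixed points are then read off as roots of the respective characteristic polynomials and matched to the barycentric positions $M$ and $G$ inside $\Delta$ using the explicit coordinates of $O,P,Q$. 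Once these identifications are in place, the relations $(A^{-1}B^{k})^{2}=1$ and $(B^{k}A^{k-1})^{3}=1$ follow, while $A^{2k+1}=B^{2k+1}=(AB)^{2k+1}=1$ is immediate from the rotation angle $2\alpha=4\pi/(2k+1)$ and the fact that the three rotations live in $PU(1,1)$.

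To finish, set $x=A^{-1}B^{k}$ and $y=B^{k}A^{k-1}$; the listed relations plus a short Tietze manipulation yield $(xy)^{2k+1}=1$, so the abstract group presented by the five relations is a quotient of $\Delta(2,3,2k+1)=\langle x,y\mid x^{2}=y^{3}=(xy)^{2k+1}=1\rangle$. Since the geometric argument of the second paragraph already provides a surjection from this abstract quotient onto $\Gamma_{-q}\cong\Delta(2,3,2k+1)$, and the latter is nontrivial of this form, the surjection is an isomorphism and the presentation is exact. The main obstacle is the explicit trace calculation described above: because composition of two hyperbolic rotations has rotation angle given by a nontrivial formula depending on the hyperbolic distance between the centers (unlike the Euclidean case), the two ``extra relations'' cannot be deduced by symbolic addition of angles in $\alpha$ and must instead be extracted from the concrete $SU(1,1)$ representation.
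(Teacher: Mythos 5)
Your geometric frame --- reduction to $m=1$ by Galois conjugation, barycentric subdivision of $\Delta$ into six $(\pi/2,\pi/3,\pi/(2k+1))$-triangles, identification of $\Gamma_{-q}$ with $\Delta(2,3,2k+1)$ --- is exactly the paper's. The gap sits at what you yourself label the crux: the claims that $A^{-1}B^k$ is the order-two rotation about the midpoint of $OP$ and that $B^kA^{k-1}$ is the order-three rotation about the barycenter are only asserted, backed by a promissory note that ``a direct matrix calculation'' of traces in $SU(1,1)$ will confirm them. That computation is not routine: it needs a closed form for $\overline{B}{}^k$ (hence diagonalizing $\overline{B}$) and a trace identity valid for every $k$, and nothing in your write-up explains why these particular words, rather than any others, should be the right ones. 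Note also that your generation step requires the fixed points of the two words together with a vertex of $\Delta$ to be the three vertices of a \emph{single} tile of the $(2,3,2k+1)$-tessellation; an order-$2$ and an order-$3$ element with unrelated centers need not generate the whole triangle group together with $A$, so the fixed-point locations cannot be waved through.

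The paper avoids the matrix computation entirely, and this is how you should close the gap. From the subdivision one reads off $a=\alpha^2$, $b=v\alpha^2v=u^2\alpha^2u$, $c=u\alpha^2u^2$, where $\alpha,u,v$ are the standard generators of $\Delta(2,3,2k+1)$; since $\gcd(2,2k+1)=1$ this gives $\alpha=a^{k+1}$, and then the relations $\alpha uv=1$, $v^2=1$, $u^3=1$, $b^k=va^kv$ are \emph{solved} for the unknowns: $u=a^kv$, then $v=a^kb^ka^k$ and $u=a^{-1}b^ka^k$. This simultaneously shows that the inclusion $\Gamma_{-q}\subset\Delta(2,3,2k+1)$ is onto and hands you the two extra relations, with no trace identities to verify. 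The same substitution is what makes your final step rigorous: to conclude that the abstractly presented group is a quotient of $\Delta(2,3,2k+1)$ you must express $A$ and $B$ as words in your $x$ and $y$ (equivalently in $\alpha,u,v$, via $A=\alpha^2$) inside the presented group and check that the five relations follow from $x^2=y^3=(xy)^{2k+1}=1$; you never produce such expressions, and ``a short Tietze manipulation yields $(xy)^{2k+1}=1$'' goes in the wrong direction for that purpose. Finally, your concluding ``quotient of $\Delta$ surjecting onto $\Delta$, hence isomorphic'' implicitly uses that $\Delta(2,3,2k+1)$ is Hopfian; that is true, but it should be said (or avoided, as the paper does, by exhibiting the explicit inverse homomorphism $\alpha\mapsto a^{k+1}$, $u\mapsto A^{-1}B^kA^k$, $v\mapsto A^kB^kA^k$).
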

\begin{proof} 
It suffices to consider the case $m=1$, 
as in the previous Proposition. 
The proof of the discreteness in (\cite{Deraux}, Theorem 7.1) 
shows that the group  $\Delta(\frac{2k+1}{2},\frac{2k+1}{2},\frac{2k+1}{2})$, 
which is generated by the rotations $a,b,c$ around the vertices  of 
the triangle $\Delta$   
embeds into the triangle group associated to a smaller triangle 
$\Delta'$. One constructs $\Delta'$ by considering all 
geodesics of $\Delta$ joining a vertex and the midpoint of its 
opposite side. The three median geodesics pass through the 
barycenter of $\Delta$ and subdivide $\Delta$ into 6 equal triangles.
We can take for $\Delta'$ any one of the 6 triangles of the subdivision. 
It is immediate that $\Delta'$ has angles 
$\frac{\pi}{2k+1}, \frac{\pi}{2}$ and $\frac{\pi}{3}$ so that the associated 
triangle group is $\Delta(2,3,2k+1)$. 
This group has the presentation:  
\[  \Delta(2,3,2k+1)=\langle \alpha, u, v \, ; \, \alpha^{2k+1}=u^3=v^2=\alpha uv=1\rangle, \]
where the generators are the rotations of double angle around the 
vertices of the triangle $\Delta'$.

\begin{lemma}\label{oddlem}
The natural embedding of $\Delta(\frac{2k+1}{2},
\frac{2k+1}{2},\frac{2k+1}{2})$ into 
$\Delta(2,3,2k+1)$ is an isomorphism. 
\end{lemma}
\begin{proof}
A simple geometric computation shows that:  
\[ a= \alpha^2, \, b=v \alpha^2v=u^2\alpha^2u, \, 
c=u\alpha^2 u^2. \]
Therefore $\alpha=a^{k+1}\in \Delta(\frac{2k+1}{2},
\frac{2k+1}{2},\frac{2k+1}{2})$.

\vspace{0.2cm}\noindent
From the relation 
$\alpha u v=1$ we derive $a^{k+1}uv=1$, and 
thus $u=a^kv$. The relation $u^3=1$ reads now 
$a^k(va^kv)a^kv=1$ and replacing $b^k$ by $va^kv$ we find that  
$v=a^kb^ka^k\in \Delta(\frac{2k+1}{2},
\frac{2k+1}{2},\frac{2k+1}{2})$. 

\vspace{0.2cm}\noindent
Further $u=a^kv=a^{-1}b^ka^k\in \Delta(\frac{2k+1}{2},
\frac{2k+1}{2},\frac{2k+1}{2})$. This means that 
$\Delta(\frac{2k+1}{2},
\frac{2k+1}{2},\frac{2k+1}{2})$ is actually  
$\Delta(2,3,2k+1)$, as claimed.
\end{proof}

\vspace{0.2cm}\noindent
It suffices now to find a presentation of $\Delta(2,3,2k+1)$ that uses the 
generators $A=a, B=b$. It is not difficult to 
show that the group  with the presentation of the statement 
is isomorphic to $\Delta(2,3,2k+1)$, the inverse 
homomorphism sending $\alpha$ into $A^{k+1}$, 
$u$ into $A^{-1}B^kA^k$ and $v$ into $A^kB^kA^k$.  
\end{proof}

\vspace{0.2cm}\noindent 
A direct consequence of Propositions \ref{even} and \ref{odd} is the 
following abstract description of the image of Burau's representation: 

\begin{corollary}\label{inftriang}
If  $q$ is not a primitive root of unity of order in   
the set $\{1,2,3,4,6,10\}$,
then $\Gamma_q$ is an infinite triangle group. 
\end{corollary}

\vspace{0.2cm}\noindent 
Alternatively, we obtain a set of normal generators for the kernel 
of Burau's representation, as follows: 

\begin{corollary}\label{kernelburau}
Let $n\not\in\{1, 6\}$ and $q$ a primitive root of unity of order $n$. 
We denote by $N(G)$ the normal closure of a subgroup 
$G$ of $\langle g_1^2,g_2^2\rangle$. 
Then the kernel 
$\ker \beta_{-q}:{\langle g_1^2,g_2^2\rangle\to PGL(2,\C)}$
of the restriction of Burau's representation is given by: 
\[
\left\{\begin{array}{ll} 
N(\langle g_1^{2k},g_2^{2k}, (g_1^2g_2^2)^k\rangle), & {\rm if }\: n=2k;\\
N(\langle g_1^{2(2k+1)},g_2^{2(2k+1)}, (g_1^2g_2^2)^{2k+1},  
(g_1^{-2}g_2^{2k})^{2}, (g_2^{2k}g_1^{2(k-1)})^3\rangle), & {\rm if }\: n=2k+1.\\ 
\end{array}\right.\]  
\end{corollary}

\subsection{Proof of Theorem \ref{B3}}
In order to prove Theorem \ref{B3} we need some preliminary lemmas 
explaining how to retrieve the kernel of Burau's representation 
of $B_3$ from known information on its restriction to the free 
subgroup $\langle g_1^2,g_2^2\rangle$ of $PB_3$. 

\vspace{0.2cm}\noindent 
The case when $q$ is of odd order is particularly simple: 
\begin{lemma}\label{oddl}
If $n=2k+1$, $k\geq 3$, the inclusion $PB_3\subset B_3$ induces an isomorphism: 
\[ \frac{PB_3}{PB_3\cap \ker\beta_{-q}}\to 
\frac{B_3}{\ker\beta_{-q}} \cdot\]
Equivalently, we have an exact sequence:  
\[ 1\to PB_3\cap \ker\beta_{-q}\to \ker\beta_{-q}\to S_3 \to 1.\]
\end{lemma}
\begin{proof}
The induced map is clearly an injection. 
Observe next that $g_1^{2k+1}, g_2^{2k+1}\in \ker \beta_{-q}$ 
and thus for every $x\in B_3$ there exists some 
$\eta\in \ker\beta_{-q}$ such that $\eta x\in PB_3$. 
Thus the image of the class $\eta x$ is  the class of $x$ and this 
shows that the induced homomorphism is also surjective. 
The claims follow. 
\end{proof}

\vspace{0.2cm}\noindent 
When $q$ has an even order we will need an additional combinatorial argument:
\begin{lemma}\label{evenl}
If $n=2k$, $k\geq 4$, then $\ker\beta_{-q}\subset PB_3$. 
Thus the inclusion $PB_3\subset B_3$ induces the exact sequence:  
\[ 1\to \frac{PB_3}{PB_3\cap \ker\beta_{-q}}\to 
\frac{B_3}{\ker\beta_{-q}}\to S_3\to 1\]
\end{lemma}
\begin{proof}
It suffices to show that $\beta_{-q}(g)\not\in \beta_{-q}(PB_3)$ 
for $g\in \{g_1,g_2,g_1g_2,g_2g_1,g_1g_2g_1\}$. 
Since none of $\beta_{-q}(g)$, for $g$ as above is a scalar matrix, this  
claim is equivalent to show that 
  $\beta_{-q}(g)\not\in \beta_{-q}(\langle g_1^2,g_2^2\rangle)=
\langle A,B \rangle$. 
We will  conjugate everything and work instead with 
$\overline{A}$ and $\overline{B}$. The triangle group generated by 
$\overline{A}$ and $\overline{B}$ has a fundamental domain 
consisting of the rhombus $\Delta^*$, which is  the union of 
$\Delta$ with its reflection image $R_j\Delta$. 
The common edge of the two triangles 
of the rhombus will be called a diagonal.

\vspace{0.2cm}\noindent
The image of $g_i$ is the rotation of angle $\alpha$ around a vertex 
of the triangle $\Delta$. If this rotation were an element of  
$\Delta(k,k,k)$, then it would 
act as an automorphism of the tessellation with copies of $\Delta^*$.  
When the vertex fixed by $g_i$ lies on the diagonal of $\Delta^*$, 
then a rotation of angle $\alpha$ sends the rhombus onto an 
overlapping rhombus (having one triangle in common) and thus it cannot be an automorphism of the tessellation, which is a contradiction.

\vspace{0.2cm}\noindent
This argument does not work when the vertex is 
opposite to the diagonal. However, let us color the triangle $\Delta$ 
in white and $R_j\Delta$ in black. Continue this way by coloring all 
triangles in black and white so that adjacent triangles have 
different colors. It is easy to see that the rotations 
of angle $2\alpha$ (and hence all elements of the group  $\Delta(k,k,k)$) 
send white triangles into white triangles. But the rotation of angle 
$\alpha$ around a vertex opposite to the diagonal sends a white 
triangle into a black one. This contradiction shows that the image of the $g_i$ 
does not belong to $\Delta(k,k,k)$. 

\vspace{0.2cm}\noindent
The last cases are quite similar. The images of $g_1g_2$ and $g_2g_1$ 
send $\Delta^*$ into an overlapping rhombus having one triangle in common. 
Eventually the image of $g_1g_2g_1$ does not preserve the black and white 
coloring. This proves the lemma. 
\end{proof}

\vspace{0.2cm}\noindent
We are now able to prove 
Theorem \ref{B3}, which we restate here for the reader's convenience: 
\begin{theorem}\label{B3re}
Assume that $q$ is a primitive $n$-th root of unity and $g_1,g_2$ are the 
standard generators of $B_3$. Then 
$\beta_{-q}(B_3)$ has a presentation with generators 
$g_1,g_2$ and relations: 
\begin{enumerate}
\item The case $n=2k$ and $k$ is odd: 
\[\begin{array}{lll}
\mbox{\rm Braid relation:} &  &  g_1g_2g_1=g_2g_1g_2, \\
\mbox{\rm Power relations:} &  & g_1^{2k}=g_2^{2k}=(g_1^2g_2^2)^k=1. \\
\end{array}\]
\item The case $n= 2k$ and $k$ is even: 
\[\begin{array}{lll}
\mbox{\rm Braid relation:} &  &  g_1g_2g_1=g_2g_1g_2, \\
\mbox{\rm Power relations:} &  & g_1^{2k}=g_2^{2k}=(g_1^2g_2^2)^{2k}=1. \\ 
\end{array}\]
\item The case $n= 2k+1$: 
\[\begin{array}{lll}
\mbox{\rm Braid relation:} &  &   g_1g_2g_1=g_2g_1g_2, \\
\mbox{\rm Power relations:} &  &  g_1^{2k+1}=g_2^{2k+1}=(g_1^2g_2^2)^{2(2k+1)}=1.\\
\end{array}\]
\end{enumerate}
\end{theorem}
\begin{proof}
When $n\in \{2,3,4,5\}$, this is already proved in Proposition \ref{finiteim}. 
We suppose then $n\geq 7$. 

\vspace{0.2cm}\noindent 
The strategy of the proof is to lift the triangle group presentation of 
$\Gamma_{-q}$ to $\beta_{-q}(\langle g_1^2,g_2^2\rangle)$ 
and then to $\beta_{-q}(PB_3)$, by adding 
a central generator. We add further the standard 
generators $g_1,g_2$ of $B_3$ and use the previous two lemmas 
in order to obtain a presentation of $\beta_{-q}(B_3)$ and then get rid 
of redundant generators and relations.

\vspace{0.2cm}\noindent 
Lemma \ref{evenl} shows that  $\ker \beta_{-q}$ 
has the same normal generators as 
$\ker\beta_{-q}\cap PB_3$, when $n$ is even. Lemma \ref{oddl} states 
that for odd $n=2k+1$ a set of normal generators of $\ker \beta_{-q}$
is obtained by adding the two elements 
$g_1^{2k+1}$ and $g_2^{2k+1}$ to a set of 
normal generators of $\ker\beta_{-q}\cap PB_3$.
In this way one produces a presentation of $\beta_{-q}(B_3)$ from a 
presentation of $\beta_{-q}(PB_3)$. 

\vspace{0.2cm}\noindent 
Furthermore, $PB_3$ is the direct product of the free group 
$\langle g_1^2,g_2^2\rangle$ with the center of $B_3$, which is generated 
by $(g_1g_2)^3$. 
Now $\beta_{-q}(g_1g_2)^3$ is the scalar matrix $-q^3{\mathbf 1}$. 
The order of $-q^3$ is $6k/({\rm gcd}(3,k){\rm gcd}(2,k+1))$ if 
$q$ is a primitive $2k$-th root of unity and is equal to 
$r=6(2k+1)/{\rm gcd}(3,2k+1)$ when $q$ is a primitive $2k+1$-th root of unity. 
Therefore a presentation of $\beta_{-q}(PB_3)$ can be obtained 
from a presentation of $\beta_{-q}(\langle g_1^2,g_2^2\rangle)$ 
by adjoining  a new central generator $(g_1g_2)^3$ and 
the following center relations:
\[ (g_1g_2)^{6k/ {\rm gcd}(2,k+1){\rm gcd}(3,k)}=1, {\rm for \: even}\: n=2k, \]
\[ ( g_1g_2)^{6(2k+1)/{\rm gcd}(3,2k+1)}=1,  {\rm for \: odd}\: n=2k+1.\] 
This new central generator will be redundant as soon as we pass to $B_3$ 
with its standard generators $g_1,g_2$. 

\vspace{0.2cm}\noindent  
The group $\beta_{-q}(\langle g_1^2,g_2^2\rangle)\subset GL(2,\C)$ 
is a central extension of its image mod scalars 
$\Gamma_{-q}\subset PGL(2,\C)$. Thus 
we can obtain a presentation of it by looking at 
the lifts of the relations  holding in $\Gamma_{-q}$. 

\vspace{0.2cm}\noindent 
Let $n=2k$. The lifts of the relations $A^k=B^k=1$ in $\Gamma_{-q}$ 
are the relations $g_1^{2k}=g_2^{2k}=1$ in 
$\beta_{-q}(\langle g_1^2,g_2^2\rangle)$. The eigenvalues of 
the matrix $AB$  are $-q^3$ and $-q$ so that  
\[\beta_{-q}((g_1^2g_2^2)^k)= \left\{\begin{array}{ll}
-\mathbf 1, & {\rm if }\: k\equiv 0({\rm mod }\: 2); \\
\mathbf 1, & {\rm if }\: k\equiv 1({\rm mod }\: 2). \\
\end{array}\right.
\]
Thus for odd $k$ it is enough to add the relation 
$(g_1^2g_2^2)^k=1$. 

\vspace{0.2cm}\noindent 
For even $k$ the element $(g_1^2g_2^2)^k$ is central of order 2. 
On the other hand, one proves  by recurrence on $m$ that the following 
combined relation holds true in $B_3$:
\[ (g_1g_2)^{3m}=  g_1^{2m}g_2(g_1^2g_2^2)^mg_2^{-1}.\]
Taking $m=k$ and recalling that $(g_1g_2)^3$ is central we 
find that $g_1^{2k}=1$  implies that: 
\[ (g_1^2g_2^2)^k=(g_1g_2)^{3k}.\]
Thus the fact that $(g_1^2g_2^2)^k$ is central is a consequence 
of the braid and power relations. Thus it suffices to add 
the power relation $(g_1^2g_2^2)^{2k}=1$, in order to get a presentation 
of $\beta_{-q}(B_3)$.

\vspace{0.2cm}\noindent 
For odd $n=2k+1$ the lifts of the relations $A^n=B^n=1$ are 
$g_1^{2n}=g_2^{2n}=1$, which are consequences of the 
power relations $g_1^n=g_2^n=1$. 
Furthermore, we verify that:
\[\beta_{-q}((g_1^2g_2^2)^{2k+1})= -\mathbf 1,\]
hence $(g_1^2g_2^2)^{2k+1}$ is central of order 2. 
The argument used above for even $k$ 
shows that $g_1^{2k+1}=1$ and the braid relations imply that 
$(g_1^2g_2^2)^{2k+1}$ is central, so it suffices to add the 
last power relation $(g_1^2g_2^2)^{2(2k+1)}=1$. 
The remaining lifts of relations in $\Gamma_{-q}$ are redundant. 
In fact, braid and powers relations give us:  
\[ (g_1^{-2}g_2^{2k})^2=(g_1^{-2}g_2^{-1})^2=(g_1g_2)^{-3},\]
\[ (g_1^{2k}g_2^{2k-2})^3=(g_1^{-1}g_2^{-3})^3=(g_1g_2)^{-6}.\]

\vspace{0.2cm}\noindent 
Eventually, a direct inspection shows that center relations  
are obtained from the combined relation above along with the braid 
and power relations. 
\end{proof}

{
\small      
      
\bibliographystyle{plain}

}

\end{document}